\def\lexl{<_{\mathrm{lex}}}
\def\N{\mathbb N}
\def\Z{\mathbb Z}
\def\Q{\mathbb Q}
\def\R{\mathbb R}
\def\A{\mathcal{A}}
\def\eps{\varepsilon}
\def\RR{\mathcal{R}}
\def\LL{\mathcal{L}}
\newtheorem{theorem}{Theorem}
\newtheorem{thm}[theorem]{Theorem}
\newtheorem{prop}[theorem]{Proposition}
\newtheorem{lem}[theorem]{Lemma}
\newtheorem{conj}[theorem]{Conjecture}
\newtheorem{rem}[theorem]{Remark}
\begin{document}
\title{Periodic expansion of one by Salem numbers}
\date{}
\author{Shigeki Akiyama and Hachem Hichri}
\address{Shigeki Akiyama, Institute of Mathematics, University of Tsukuba, 
1-1-1 Tennodai, Tsukuba, Ibaraki, 305-8571 Japan}
\email{akiyama@math.tsukuba.ac.jp}
\address{
Hachem Hichri, Departement de Mathematiques (UR18ES15), Faculte des sciences de Monastir, Universite de Monastir, Monastir 5019, Tunisie}
\email{hichemhichri@yahoo.fr}
\maketitle

\begin{abstract}
We show that for a Salem number $\beta$ of degree $d$, 
there exists a positive constant $c(d)$
that $\beta^m$ is a Parry number for integers $m$ of natural density $\ge c(d)$.
Further, we show $c(6)>1/2$ and discuss a 
relation to the discretized rotation in dimension $4$.
\end{abstract}

Let $\beta>1$. R\'enyi \cite{Renyi1957} introduced the beta transformation on $[0,1)$ by
$$
T_{\beta}(x)=\beta x-\lfloor \beta x \rfloor.
$$
This map has long been applied in many branches of mathematics, such as number theory, dynamical system, coding theory, and computer sciences. 
The dynamical system $([0,1),T_{\beta})$ 
admits the ``Parry measure" $\mu_{\beta}$:
a unique invariant measure equivalent to the Lebesgue measure \cite{Parry1960}.
The system is ergodic with respect to $\mu_{\beta}$ and 
gives an important class of systems with explicit invariant density.  
The transformation
$T_{\beta}$ gives a representation of real numbers in a non-integer base (c.f. \cite{Frougny1970}).
Defining $x_n=\lfloor \beta T_{\beta}^{n-1}(x)\rfloor$, 
we obtain the ``greedy" expansion:
$$
x= \frac {x_1}{\beta}+\frac {x_2}{\beta^2}+\dots
$$
which is a generalization of decimal or binary ($\beta=10,2$) 
representations by an arbitrary base $\beta>1$. 
%Hereafter we assume that $\beta\not \in \N$. 
The word $d_{\beta}(x):=x_1x_2\dots $ corresponding to $x$ 
is an infinite word over $\A=\{0,1,\dots, \lceil \beta \rceil -1\}$.
An infinite word $x_1x_2\dots\in \A^{\N}$ is eventually periodic if it is written as $x_1x_2\dots x_m(x_{m+1}\dots x_{m+p})^{\infty}$. We choose the minimum 
$m$ 
and $p$ when such $m$ and $p$ exist and call it $(m,p)$-periodic.
The dynamical properties of a piecewise linear
map are governed by the orbit of discontinuities. For $T_{\beta}$, there exists
essentially only one discontinuity corresponding to the right end point $1$.
The expansion of $1$
is the word $d_{\beta}(1):=\lim_{\epsilon \downarrow 0} d_{\beta}(1-\epsilon)=c_1c_2\dots$.
For a fixed $\beta$, 
an element $x_1x_2\dots\in \A^{\N}$ is realized as $d_{\beta}(x)$ with some $x\in [0,1)$ if and only if 
$$
\sigma^n(x_1x_{2}\dots) \lexl c_1c_2\dots 
$$
for any $n\in \N$. 
Here the shift is defined as $\sigma((x_{i})):=(x_{i+1})$ 
for a (one-sided or two-sided) infinite word $(x_i)\in \A^{\N} \cup \A^{\Z}$
and $\lexl$ is the lexicographic order.
We say that $x_1x_2\dots\in \A^{\N}$ is admissible if this condition holds.
A finite word $x_1\dots x_n\in \A^*$ is admissible if $x_1\dots x_n0^{\infty}$ is admissible.
An element of  $c_1c_2\dots\in \A^{\N}$ is realized as $d_{\beta}(1)$ with some $\beta>1$ if and only if
\begin{equation}
\label{Selflex}
\sigma^n(c_1 c_{2}\dots) \lexl c_1c_2\dots
\end{equation}
for any $n\ge 1$, see \cite{Parry1960,Ito-Takahashi:74} for details.
Given $\beta>1$, the beta shift $(X_{\beta},\sigma)$ is a subshift 
consisting of the set of bi-infinite words $(a_i)_{i\in \Z}$ over $\A$ 
such that 
every subword $a_na_{n+1}\dots a_m$ is admissible.
Beta shift $X_{\beta}$ is sofic if and only if 
$d_{\beta}(1)$ is eventually periodic, 
and $X_{\beta}$ is a subshift of finite type if and only if $d_{\beta}(1)$ is purely periodic,
i.e., $d_{\beta}(1)$ is $(0,p)$-periodic, see
\cite{Blanchard1989,Akiyama_Dual}\footnote{In \cite{Parry1960}, expansion of one is defined formally by $(\lfloor \beta T_\beta^n(1) \rfloor)_{n\in \N}$ and 
the
purely periodic expansion $d_{\beta}(1)=(c_1c_2\dots c_{p-1}c_p)^{\infty}$ is expressed as a finite expansion $c_1c_2\dots c_{p-1}(1+c_p) 0^{\infty}$.}.
The $\beta$ is called a Parry number in the former case, and a simple Parry number in the latter case. When the topological dynamics $(X_{\beta},\sigma)$ is sofic, $T_{\beta}$  belongs to an important class of interval maps; Markov maps 
by finite partition (see \cite{BG, PY}).

A Pisot number is an algebraic integer $>1$ so
that all of whose conjugates have modulus less than one. A Salem number
is an algebraic integer $>1$ so
that all of whose conjugates have modulus not greater than one and at least one of the conjugates has modulus one. 
If $\beta$ is a Pisot number, then $\{ T_{\beta}^{i}(x)\ |\ i\in \N \}$ is finite for $x\in \Q(\beta)$, i.e., the word $d_{\beta}(x)$ is eventually periodic, see \cite{SCHMIDT1980,Bertrand1977}.
Consequently, a Pisot number is a Parry number. 
Schmidt \cite{SCHMIDT1980}
proved that if $d_{\beta}(x)$ is eventually periodic for every
$x\in \Q\cap [0,1)$ then $\beta$ is a Pisot or Salem number. 
Determining periodicity/non-periodicity 
of $d_{\beta}(x)$ by a Salem number $\beta$ and 
$x\in \Q(\beta)$ remains a difficult problem.
The main obstacle is that we do not have any idea until present
to show that $d_{\beta}(x)$ is {\bf not} eventually periodic when $\beta$ is a 
Salem number.
Boyd \cite{Boyd1989Degree4}
showed that a Salem number of degree $4$
is a Parry number, by classifying all shapes of $d_{\beta}(1)$.
Since then, apart from a computational or heuristic discussion like 
Boyd \cite{Boyd1996}, Hichri \cite{Hichri2015beta, hichri2014betapar,hichri2014beta},
we have very few results
on the beta expansion by Salem numbers.  
In this paper, we make some additions to this direction. 

\begin{thm}
\label{main}
For a Salem number $\beta$ of degree $d$, there exist infinitely many positive integer $m$'s that $\beta^m$ is a Parry number. 
More precisely, there exists a positive
constant $c(d)$ depending only on $d$ that
$$
\liminf_{M\to \infty} \frac {\mathrm{Card}\left\{ m\in [1,M]\cap \Z\ \left.|\ d_{\beta^m}(1) \text{ is $(1,p)$-periodic with some $p\in \N$} \right.\right\}}
M\ge c(d).
$$
\end{thm}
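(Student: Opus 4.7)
I would prove that a positive density of $m\in \N$ make the minimal polynomial of $\alpha:=\beta^m$ coincide with the polynomial that encodes a $(1,d-1)$-periodic greedy expansion of $1$ in base $\alpha$; the theorem then follows with $p=d-1$ (or a divisor). Summing the geometric series shows that $d_\alpha(1)=c_1(c_2\cdots c_{p+1})^\infty$ is equivalent to $\alpha$ being a root of
\[
Q(X):=X^{p+1}-c_1X^{p}-c_2X^{p-1}-\cdots-(c_p+1)X-(c_{p+1}-c_1),
\]
subject to $0\le c_i\le c_1$ ($i\ge 2$) and the Parry admissibility \eqref{Selflex}. When $p=d-1$, both $Q$ and the minimal polynomial $P_\alpha$ are monic of degree $d$, so $P_\alpha\mid Q$ forces $P_\alpha=Q$. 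Writing $P_\alpha(X)=X^d+a_{d-1}X^{d-1}+\cdots+a_1X+1$ (reciprocal with constant $1$, as holds for Salem), equating coefficients and using reciprocity translates $(1,d-1)$-periodicity into
\[
a_{d-1}\le a_i\le 0\qquad(i=2,\dots,d-2),
\]
where $c_1=-a_{d-1}$, $c_i=-a_{d-i}$, and $c_{d-1}=c_d=c_1-1$ are forced by the reciprocity and the constant $1$.

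\noindent\textbf{Salem factorization.} I would exploit the Salem structure $P_\alpha(X)=(X^2-s_mX+1)\,U_m(X)$ with $s_m:=\beta^m+\beta^{-m}$ and $U_m(X):=\prod_{j=1}^{r}(X^2-2\cos(2\pi m\phi_j)X+1)$, where $r=(d-2)/2$ and $e^{\pm 2\pi i\phi_j}$ are the unit-circle conjugates of $\beta$. Direct multiplication gives
\[
a_i=u_{i-2}-s_m u_{i-1}+u_i\qquad(i=1,\dots,d-1),
\]
with the boundary conventions $u_0=u_{d-2}=1$ and $u_k=0$ outside $\{0,\dots,d-2\}$. Since $s_m\to\infty$ while $|u_k|$ is bounded uniformly in $m$, dividing by $s_m$ reduces the inequalities of the previous paragraph, modulo $O(1/s_m)$, to
\[
0\le u_k\le 1,\qquad k=1,\dots,d-3.
\]
Strict inequalities here also yield strict admissibility ($c_i<c_1$ for $i=2,\dots,d-2$), so the combinatorial Parry condition comes for free.

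\noindent\textbf{Equidistribution and density.} The real-analytic map $\Psi:\T^r\to\R^{d-3}$ sending $(\theta_1,\dots,\theta_r)$ to the coefficients $(u_1,\dots,u_{d-3})$ of $\prod_j(X^2-2\cos(2\pi\theta_j)X+1)$ is the governing object. I would exhibit an explicit point of $\Psi^{-1}((0,1)^{d-3})$---for $d=4$ any $\theta_1$ with $\cos(2\pi\theta_1)\in(-1/2,0)$ works, and for general $d$ one prescribes target values $u_k^\ast\in(0,1)$ and verifies that the resulting degree-$(d-2)$ reciprocal polynomial has all its roots on the unit circle---so that $W:=\Psi^{-1}((0,1)^{d-3})$ is a nonempty open subset of $\T^r$. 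Weyl's equidistribution of $\{(m\phi_1,\dots,m\phi_r)\bmod 1\}_{m\ge 1}$ on the closed subgroup $T\subset \T^r$ it generates then yields
\[
\liminf_{M\to\infty}\frac{\#\{1\le m\le M\,:\,(m\phi_j)_j\in W\}}{M}=\mathrm{Haar}_T(W\cap T),
\]
and the theorem follows with $c(d)$ a positive uniform lower bound on this measure.

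\noindent\textbf{Main obstacle.} The most delicate step is guaranteeing $\mathrm{Haar}_T(W\cap T)>0$ uniformly over all Salem $\beta$ of degree $d$. In the generic case, $1,\phi_1,\dots,\phi_r$ are $\Q$-linearly independent, $T=\T^r$, and positivity is immediate from openness of $W$. For degenerate Salem numbers admitting multiplicative relations among the unit-circle conjugates, $T$ is a proper subtorus and one must rule out that $T$ lies entirely outside $W$; this should follow from the transitive Galois action on the pairs $\{e^{\pm 2\pi i\phi_j}\}$ (under which $W$ is invariant) combined with the classification of $\Q$-rational subtori of $\T^r$, but converting this into a lower bound $c(d)$ that is uniform in $\beta$ (and not merely positive for each fixed $\beta$) is the technical heart of the argument.
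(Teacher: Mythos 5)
Your plan follows essentially the same route as the paper: reduce $(1,d-1)$-periodicity of $d_{\beta^m}(1)$ to the requirement that the middle coefficients of the degree-$(d-2)$ cofactor $\prod_j\bigl(x^2-2\cos(m\theta_j)x+1\bigr)$ lie in a fixed open subset of $(0,1)^{d-3}$ once $\beta^m$ is large (this is the content of Lemma \ref{Str1}), and then apply equidistribution of $(m\theta_j)_j$ on the torus to a nonempty open target set. However, the one step you leave to be ``verified'' --- that $W=\Psi^{-1}\bigl((0,1)^{d-3}\bigr)$ is nonempty --- is precisely the nontrivial construction, and your suggestion that one may ``prescribe target values $u_k^{\ast}\in(0,1)$'' and check the roots lie on the unit circle does not go through for arbitrary targets: a self-reciprocal polynomial with middle coefficients near $1$ need not have all roots on the unit circle, so some admissible coefficient vector must actually be exhibited. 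The paper does this with an interlacing argument (Lemma \ref{Circle}): if all middle coefficients equal a common value smaller than $1/(2n-2)$ (it takes $1/(4(n-1))$, with $d=2n+2$), the associated trace polynomial alternates in sign at the points $2\cos(k\pi/n)$, forcing all roots onto the unit circle; the arguments $\eta_i$ of those roots are the centre of the target box, and a quantitative perturbation of the $\eta_i$ yields an explicit $\varepsilon$ and hence $c(d)\ge(3d)^{-d}$. Without such a construction $W$ could a priori be empty and the argument collapses, so this is a genuine gap rather than a routine verification.

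Conversely, what you single out as the ``main obstacle'' is not an obstacle at all. For a Salem number of degree $d=2n+2$ it is classical that $1,\theta_1/\pi,\dots,\theta_n/\pi$ are linearly independent over $\Q$ (the paper records this in the Preliminary, citing Dubickas--Jankauskas; it follows by applying conjugate maps to a putative multiplicative relation among the unit-circle conjugates). Hence the closed subgroup $T$ you introduce is always the full torus, equidistribution is just the multidimensional Weyl theorem, and since $W$ depends only on $d$ and not on $\beta$, its (positive) measure is automatically a uniform lower bound $c(d)$. So the uniformity question you describe as ``the technical heart'' is free, and the actual heart of the proof is the explicit interlacing construction of the target box.
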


Note that $c(4)=1$ was shown in \cite{Boyd1989Degree4}.
Our method gives a rather small bound $c(d)=(3d)^{-d}$, 
see Remark \ref{Cd}.
We can give a good lower bound when $d=6$.

\begin{thm}
\label{main2}
Given a sextic Salem number $\beta$, for more than half of positive integer $m$'s, $\beta^m$ is a $(1,p)$-periodic Parry number for some $p\in \N$.
\end{thm}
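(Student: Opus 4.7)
Write the Galois conjugates of $\beta$ as $\beta, \beta^{-1}, e^{\pm 2\pi i \theta_1}, e^{\pm 2\pi i \theta_2}$, and set $r_n = T_{\beta^m}^{n-1}(1)$, $c_n = \lfloor \beta^m r_n \rfloor$. The recursion $r_{n+1} = \beta^m r_n - c_n$ takes place in $\Q(\beta)$, so we may track its Galois images: above $\beta^{-1}$ the image contracts, while above the two complex pairs on the unit circle the four images together trace out an orbit on $\C^2 \cong \R^4$ evolving by the diagonal rotation with angles $(m\theta_1, m\theta_2)$ followed by the integer translation $(-c_n, -c_n)$. This is the discretized rotation in dimension $4$ mentioned in the abstract, and eventual periodicity of $d_{\beta^m}(1)$ is equivalent to this orbit being bounded (equivalently, finite, since the values lie in a lattice).

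The plan is to sharpen the argument behind Theorem~\ref{main} in the sextic case. Following that general scheme one shows that $d_{\beta^m}(1)$ is $(1,p)$-periodic for some $p$ whenever the rotation angle $(m\theta_1 \bmod 1, m\theta_2 \bmod 1)$ lies in an open region $R \subset \T^2$ cut out by a finite list of strict admissibility inequalities $\sigma^k d_{\beta^m}(1) \lexl d_{\beta^m}(1)$ with margin exceeding the $O(\beta^{-m})$ error from the contracting direction. For such angles the digits $c_n$ are forced by a piecewise linear function of the rotation orbit, the $\R^4$-orbit is confined to a finite lattice, and a preperiod-$1$ period-$p$ structure emerges. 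The key quantitative step is then to show $\mu(R) > 1/2$, whereas the general argument yields only $\mu(R) \ge (3d)^{-d} = (18)^{-6}$ for $d=6$. Here I would exploit the reciprocal symmetry $\theta_j \mapsto -\theta_j$ of Salem conjugates to bundle the bad regions on $\T^2$ into a finite union of sets bounded by explicit affine hyperplanes, and verify by direct computation that their total measure is strictly less than $1/2$.

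Finally, the density conclusion follows by Weyl equidistribution when $1, \theta_1, \theta_2$ are $\Q$-linearly independent; in the degenerate case (a non-trivial integral relation among $1,\theta_1,\theta_2$, equivalently a multiplicative relation $\alpha_1^b \alpha_2^c = 1$ between the unit-circle conjugates) the orbit $\{(m\theta_1, m\theta_2) \bmod \Z^2\}$ lies on a proper closed subgroup of $\T^2$, and one verifies the bound on each connected component of the closure. The main obstacle will be the quantitative measure estimate $\mu(R) > 1/2$: closing the gap from $(18)^{-6}$ to $1/2$ requires essentially tight bookkeeping of every admissibility inequality that can fail, presumably via a case analysis on $\lfloor \beta^m \rfloor$ and the first few digits $c_1 c_2$, with the $1/2$ threshold reflecting the two-dimensional structure of the rotation torus and the interplay between the two unit-circle conjugate pairs of $\beta$.
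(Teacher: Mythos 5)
Your high-level frame --- equidistribution of $(m\theta_1/2\pi,m\theta_2/2\pi)$ on $\T^2$ plus a measure estimate $>1/2$ for a good region --- matches the paper, but the proposal has a genuine gap at its core: you never supply the mechanism that certifies $(1,p)$-periodicity on an open region of angles. Saying the region is ``cut out by a finite list of strict admissibility inequalities with margin exceeding the $O(\beta^{-m})$ error'' begs the question, because the digits $c_n$ and the candidate period are themselves unknown functions of $(\alpha_1,\alpha_2)=(-2\cos\theta_1,-2\cos\theta_2)$, and a priori no single finite list of inequalities governs more than a small neighbourhood. The paper's actual engine is Lemma~\ref{Str1} (and its extension Lemma~\ref{Str2}): one chooses a self-reciprocal $R(x)\in\Z[x]$ and asks that $R(x)(x^2+\alpha_1x+1)(x^2+\alpha_2x+1)$ have all middle coefficients in $(0,1]$; then $f=R\cdot P_m$ exhibits, by repeated shifted addition of its coefficient vector, an explicit $(1,2n+1)$-periodic expansion of $1$ for $\beta^m$ sufficiently large. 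Each such $R$ carves out one explicit ``period cell'' in $(\alpha_1,\alpha_2)$-space (bounded by lines and hyperbolas, not affine hyperplanes on the torus), and these are found by a discretized-rotation search. Relatedly, your plan to bound the \emph{bad} set from above is not viable: nobody knows how to show the set of angles where periodicity fails is small --- that is essentially Conjecture~\ref{c6}, which is open. What one can actually do is accumulate certified good cells until their total mass under the density $\pi^{-2}\bigl((4-\alpha_1^2)(4-\alpha_2^2)\bigr)^{-1/2}\,d\alpha_1\,d\alpha_2$ exceeds $1/2$; the paper needs $23$ cells to reach $\approx 0.4589$ and $40$ cells (Appendix 1) to pass $0.5052$. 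This bookkeeping is the whole content of the theorem and cannot be waved through.

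Two smaller points. The ``degenerate case'' you set aside does not occur: for a Salem number, $1,\theta_1/\pi,\theta_2/\pi$ are always $\Q$-linearly independent (the standard fact quoted in the Preliminary, via applying a conjugation to a putative multiplicative relation), so equidistribution is unconditional and no analysis on proper subtori of $\T^2$ is needed. Also, the relevant measure on $(-2,2)^2$ is the pushforward of Haar measure under $\theta\mapsto -2\cos\theta$, not Lebesgue measure on the torus restricted to polyhedral regions; your ``affine hyperplane'' description of the regions would not survive this change of coordinates.
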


Finally, we discuss an interesting connection to four-dimensional discretized rotation.

\section{Preliminary}
We review basic results on Salem numbers (c.f. \cite{M.J.Bertin1992,smyth2014seventy}).
It is easy to show that a Salem number has even degree $2d \ge 4$ and its minimal polynomial is self-reciprocal. 
Thus $d_{\beta}(1)$ can not be purely periodic for a Salem number $\beta$, see \cite{Akiyama:98}.
Let $P(x)\in \Z[x]$ 
be a monic irreducible self-reciprocal polynomial of even degree $2d$. Putting 
$Q(x):=P(x)/x^{2d}$, we have $Q(x)\in \R[x+x^{-1}]$.
We write $Q(x)=G(y)$ with $y=x+x^{-1}$. Then $P(x)$ is a minimum polynomial
of a Salem number if and only if $G(2)<0$ and $G(y)$ has $d-1$ distinct roots in $(-2,2)$. $G$ is coined trace polynomial of $P$ in \cite{Boyd1996}.
The factorization
$$
G(y)=(y-\gamma)(y-\alpha_1)\dots (y-\alpha_{d-1})$$ 
with $\gamma>2$ and $\alpha_i\in (-2,2)$ corresponds to
the factorization of $P(x)$ in $\R[x]$:
\begin{equation}
\label{FactorSalem}
P(x)=\left(x-\beta\right)\left(x-\frac 1{\beta}\right) \prod_{i=1}^{d-1} \left(x^2+\alpha_i x+1\right)
\end{equation}
where $\gamma=\beta+1/\beta$ and $x^2+\alpha_i x+1$ gives a root 
$\exp(\theta_i \sqrt{-1}) =\cos(\theta_i)\pm \sin(\theta_i)\sqrt{-1}$ of $P(x)$
 with $\alpha_i=-2\cos(\theta_i)$ and $\theta_i\in (0,\pi)$.
It is well known that $1, \theta_1/\pi, \theta_2/\pi, \dots, \theta_{d-1}/\pi$ 
are linearly independent over $\Q$, i.e., $\exp(\theta_i\sqrt{-1})\ (i=1,\dots, d-1)$ are multiplicatively independent.
This is shown by applying a conjugate map to the possible
multiplicative relation among them, c.f. \cite{DubJan:20}. 
Note that this fact
guarantees that $\beta^m\ (m=1,2,\dots)$ are Salem numbers of 
the same degree $2d$.
Applying this linear independence, we see that
\begin{equation}
\label{TorusRot}
\left(\frac{m \theta_1}{2\pi}, \dots, \frac{m \theta_n}{2\pi}\right) \bmod{\Z^n}\end{equation}
is uniformly distributed in $(\R/\Z)^n$, that is, for any parallelepiped
$$
I=[a_1,b_1]\times [a_2,b_2]\times \dots \times [a_n,b_n]
$$
we have
$$
\lim_{M\to \infty} \frac 1M \sum_{m=1}^M \chi_I\left( \left(\frac{m \theta_1}{2\pi}, \dots, \frac{m \theta_n}{2\pi}\right) \bmod{\Z^n}\right) = \prod_{i=1}^n (b_i-a_i)
$$
where $\chi_I$ is the characteristic function of $I$.
Indeed, this is shown by the higher dimensional Weyl criterion (\cite{Kuipers-Niederreiter:74}).
Note that it is understood as unique ergodicity of the action
$$
x\mapsto x + \left(\frac{\theta_1}{2\pi}, \dots, \frac{\theta_n}{2\pi}\right)
$$
on $(\R/\Z)^n$. Since $(\R/\Z)^n$ is a compact metric group, minimality and
unique ergodicity is equivalent (\cite{Walters:82}), and minimality is a little
easier to show.
%In \cite{Akiyama-Tanigawa:04},
%the distribution of $\beta^n \pmod{\Z}$ is discussed using this property.

\section{Our strategy}

Our idea is to find a nice region so
that if complex conjugates of a Salem number $\beta>1$ 
fall into this region
and $\beta$ is sufficiently large, then $d_{\beta}(1)$ is eventually periodic.
We realize this idea in a general form 
which can be applied to the dominant real root of self-reciprocal polynomials, 
not only Salem numbers. The statement seems 
useless for a single $\beta$ (because it is easier to compute $d_{\beta}(1)$ directly) but we will find a nice application in the following sections.

\begin{lem}
\label{Str1}
Let us fix constants $u,v$ with $0<u<v<1$. 
If a monic polynomial $f(x)\in \Z[x]$ of degree $2n+2$
satisfies
$$
f(\beta)=0,\ f(0)=1, \ \beta> \max\left\{ \frac 2u, \frac 1{1-v} \right\}
$$
and
$$
\frac{f(x)}{(x-\beta)(x-1/\beta)} = x^{2n}+1+\sum_{i=1}^{2n-1}g_i x^i
$$
with $u<g_i<v$ for $i=1,\dots,2n-1$, then $f(x)$ is self-reciprocal and 
$d_{\beta}(1)$ is $(1,2n+1)$-periodic.
\end{lem}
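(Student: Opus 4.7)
My plan is to read off a candidate $(1,2n+1)$-periodic expansion of one directly from the coefficients of $f$ and then verify admissibility. Writing $s=\beta+1/\beta$ and expanding
\[
f(x)=(x^2-sx+1)\,h(x),\qquad h(x)=x^{2n}+1+\sum_{i=1}^{2n-1}g_i x^i,
\]
the coefficient $a_j$ of $x^j$ in $f$ equals $g_{j-2}-sg_{j-1}+g_j$ for $3\le j\le 2n-1$, with the natural boundary expressions at $j\in\{0,1,2,2n,2n+1,2n+2\}$.

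The first step is self-reciprocity. The integrality $a_j\in\Z$ translates into the second-order mod-$1$ recurrence $g_j\equiv sg_{j-1}-g_{j-2}\pmod 1$; the boundary relations additionally force $g_1\equiv g_{2n-1}\equiv s\pmod 1$ (from $a_1,a_{2n+1}$) and $g_2\equiv g_{2n-2}\equiv sg_1\pmod 1$ (from $a_2,a_{2n}$). Because the window $(u,v)\subset(0,1)$ has length $<1$, it contains at most one representative of each congruence class, so $g_1=g_{2n-1}$ and $g_2=g_{2n-2}$. Induction, together with the observation that the reversed sequence $\tilde g_j:=g_{2n-j}$ satisfies the same mod-$1$ recurrence with matching initial data, then yields $g_j=g_{2n-j}$ for all $j$, so $h$ and hence $f$ is self-reciprocal.

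Next I would read off the candidate digits. A direct summation of the geometric series shows that for any word $c_1(c_2\cdots c_{p+1})^\infty$ the identity $1=\sum_{i\ge 1}c_i/\beta^i$ is equivalent to $\beta$ being a root of $x^{p+1}-c_1x^p-\cdots-c_{p-1}x^2-(c_p+1)x-(c_{p+1}-c_1)$. Setting $p=2n+1$ and equating this polynomial coefficient by coefficient with $f$ (using self-reciprocity to simplify) forces
\[
c_1=s-g_{2n-1},\qquad c_k=sg_{k-1}-g_{k-2}-g_k\ \ (2\le k\le 2n),\qquad c_{2n+1}=c_{2n+2}=c_1-1,
\]
with conventions $g_0=g_{2n}=1$; integrality of $c_k$ is automatic. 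The hypothesis $\beta>2/u$ gives $su>2$, dominating the subtractions (whose sum is at most $1+v<2$) so that $c_k\ge 1$; the hypothesis $\beta>1/(1-v)$ gives $\beta(1-v)>1$, which after a one-line estimate yields $c_k<\beta$, so each $c_k\in\{1,\ldots,\lfloor\beta\rfloor\}$. The same inequalities give $c_1-c_k>1+2u-v>0$ for $2\le k\le 2n$, while $c_1-c_{2n+1}=c_1-c_{2n+2}=1$ by construction; thus $c_1$ is the strict maximum, so the candidate word $w=c_1(c_2\cdots c_{2n+2})^\infty$ satisfies the strict shift condition \eqref{Selflex} already from its leading letter.

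Finally, since $\beta$ is a root of this polynomial, $1=\sum_{i\ge 1}c_i/\beta^i$ holds; the map $x\mapsto\sum c_i/x^i$ is continuous and strictly decreasing on $(1,\infty)$ with limits $+\infty$ at $1^+$ and $0$ at $+\infty$, so $\beta$ is the unique real $>1$ solving this equation, and combined with the admissibility of $w$ this forces $d_\beta(1)=w$. Since $c_{2n+2}=c_1-1\ne c_1$, the preperiod is exactly $1$ and the minimal period divides $2n+1$. The main obstacle is the self-reciprocity step: one has to recognise that the integrality constraints reduce to a mod-$1$ recurrence whose solution is uniquely pinned inside the narrow window $(u,v)$, and that the mirrored recurrence matches with the same initial data.
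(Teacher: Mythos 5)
Your proposal is correct and follows essentially the same route as the paper: self-reciprocity is forced by the integrality of the coefficients pinning each $g_i$ inside a window of length less than one, and the digits $c_1=s-g_1$, $c_k=sg_{k-1}-g_{k-2}-g_k$, $c_1-1,c_1-1$ with $c_1>c_k>0$ are exactly the paper's. The only cosmetic difference is that you verify $\sum_i c_i\beta^{-i}=1$ directly by clearing denominators in the geometric series, whereas the paper derives the same periodic word by iterated shifted addition of the base-$\beta$ representation of zero coming from $f(\beta)=0$.
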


\begin{proof}
Set $g_0=g_{2n}=1$. Putting
$$
f(x)=x^{2n+2}+1-\sum_{i=1}^{2n+1} c_i x^i,
$$
we obtain
$$
c_1=\beta+1/\beta-g_1,\quad
c_{2n+1}=\beta+1/\beta-g_{2n-1}
$$
and
$$
c_i=(\beta+1/\beta) g_{i-1} - g_{i-2}-g_{i}
$$
for $i=2,\dots, 2n$. 
Since $c_i\in \Z$, $g_1$ and $g_{2n-1}$ are uniquely
determined by $\beta+1/\beta$ and thus $g_1=g_{2n-1}$ and $c_1=c_{2n+1}$.
Moreover by induction, we see $c_{i}=c_{2n+2-i}$ for every $i=1,\dots, 2n+1$, i.e., $f(x)$ is self-reciprocal. 
By assumption, we have
$$
c_i\ge \beta u -2 >0,\quad
c_1-c_i\ge (\beta+1/\beta)(1-v) -1 >0
$$
for $i=2,\dots,2n$. Therefore, we have
$$
c_i\in \Z, \quad c_1> c_j> 0\ (j=2,\dots, 2n).
$$
One can write 
$$
\beta^{2n+2}+1-\sum_{i=1}^{2n+1} c_i \beta^i=0
$$
as a representation of zero in base $\beta$:
$$
(-1),c_{1},c_{2},\dots, c_{n},c_{n+1},c_{n}, \dots ,c_2, c_1, (-1).
$$
Adding its $2n+1$ shifted form, we see
$$
(-1),c_{1},c_{2},\dots, c_{n+1}, \dots ,
c_2, c_1-1, c_1-1, c_2, \dots, c_{n+1}, \dots ,c_2, c_1,(-1)
$$
is another representation of zero. Iterating this shifted addition, 
we obtain an infinite expansion of $1$:
$$
c_1(c_2,c_3,\dots, c_{n},c_{n+1},c_{n},\dots, c_2,c_1-1,c_1-1)^{\infty}
$$
which satisfies\footnote{This implies $c_1<\lceil \beta \rceil-1$ as well.} the lexicographic condition (\ref{Selflex}).
Therefore $d_{\beta}(1)$ is $(1,2n+1)$-periodic.
\end{proof}

Logically we have to fix $u,v$ at first. However, 
this is often impractical.
When we apply Lemma \ref{Str1} to a Salem number $\beta$ 
of degree $2d$, we take a self-reciprocal monic polynomial $R(x)\in \Z[x]$
and study $f(x)=R(x)P(x)$ of degree $2n+2$, 
where $P(x)$ is the minimum polynomial of $\beta$.
Note that one can take $R(x)=1$ as well.
We check if there exist $u$ and $v$ which satisfy our requirements.
Since
$$
\frac{f(x)}{(x-\beta)(x-1/\beta)}= \sum_{i=0}^{2n} g_i x^i=
R(x) \prod_{i=1}^{d-1} (x^2+\alpha_i x+1)
$$
with $\alpha_i\in (-2,2)$, for a fixed $R(x)$, 
the problem is reduced to the set of solutions 
$(\alpha_1,\dots,\alpha_{d-1})\in \R^{d-1}$
for the system of inequalities
$$
0<g_i<1 \ (i=1,2,\dots, 2n-1),\quad -2<\alpha_j<2\ (j=1,\dots,d-1),
$$
over $d-1$ variables $\alpha_1,\dots,\alpha_{d-1}$.
A self-reciprocal monic polynomial $R(x)$ gives a certain choice of $u,v$ if and only if 
the set of solutions contains an inner point in the space $\R^{d-1}$, and
for every inner point we can find $u,v$.
Solving this set of inequalities is not an easy task 
in general, but it is feasible for degree $6$ 
since all the inequalities are quadratic.
In \S 4, we use this method to find good regions for $\beta$.
It is also useful to solve a slightly wider set of inequalities:
\begin{equation}
\label{DefIneq}
0<g_i\le 1 \ (i=1,2,\dots, 2n-1),\quad -2<\alpha_j<2\ (j=1,\dots,d-1).
\end{equation}
The solution may contain the case $g_i=1$. 
The proof of Lemma \ref{Str1} works in the same way under a little
more involved assumptions and we have a

\begin{lem}
\label{Str2}
Let us fix constants $u,v$ with $0<u<v<1$. 
If a monic polynomial $f(x)\in \Z[x]$ of degree $2n+2$
satisfies
$$
f(\beta)=0,\ f(0)=1, \ \beta> \max\left\{ \frac 2u, \frac 1{1-v} \right\}
$$
and
$$
\frac{f(x)}{(x-\beta)(x-1/\beta)} = x^{2n}+1+\sum_{i=1}^{2n-1}g_i x^i
$$
with
$$
u<g_i<v
$$
or
$$
\left(g_i=1\ \text{and}\ g_1<g_{i-1}+g_{i+1}\right)
$$
or
$$
\left(
g_i=1\ \text{and}\ g_1=g_{i-1}+g_{i+1}\  \text{and} \ g_1-g_{i+1}>u\right)
$$
hold for $i=1,\dots,2n-1$, then $f(x)$ is self-reciprocal and 
$d_{\beta}(1)$ is $(1,2n+1)$-periodic.
\end{lem}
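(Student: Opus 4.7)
I would follow the proof of Lemma~\ref{Str1} verbatim and only revisit the steps where the relaxed hypothesis ``$g_i = 1$'' replaces the strict bound $u < g_i < v$. Setting $g_0 = g_{2n} = 1$ and writing $f(x) = x^{2n+2} + 1 - \sum_{i=1}^{2n+1} c_i x^i$, the same recurrence expresses $c_i$ in terms of $g_{i-2}$, $g_{i-1}$, $g_i$, and $\beta + 1/\beta$. Since each $g_i$ still lies in $\{1\} \cup (u,v) \subset [u,1]$, the difference $g_{2n-1} - g_1 = c_1 - c_{2n+1}$ is an integer in $(-1, 1)$ hence equal to $0$; an induction using integrality of each $c_i$ then yields $g_i = g_{2n-i}$, so $f$ is self-reciprocal just as before.

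Next I would show $0 < c_j \le c_1$ for every $j = 2, \dots, 2n$, with equality $c_j = c_1$ possible only in the third (tied) case. When $g_{j-1} \in (u,v)$, the two bounds of Lemma~\ref{Str1} apply verbatim. When $g_{j-1} = 1$, positivity is immediate from $\beta + 1/\beta > 2$, while $c_1 - c_j = g_{j-2} + g_j - g_1$ is strictly positive in the second case by hypothesis and exactly zero in the third. In the third subcase I would record two pieces of slack for later use: the explicit reserve $g_1 - g_j > u$, and the automatic fact that $g_{j-2}$ and $g_j$ both lie in $(u, v)$ (neither can equal~$1$, as that would force $g_1 > 1$); in particular $g_j > u$ as well.

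With self-reciprocity and $c_j \le c_1$ in hand, the construction of a representation of zero and its shifted-addition iteration is identical to that of Lemma~\ref{Str1} and produces the candidate expansion
\[
c_1\bigl(c_2, c_3, \dots, c_{n+1}, \dots, c_2, c_1 - 1, c_1 - 1\bigr)^{\infty}.
\]
The one genuinely new step, and what I expect to be the main obstacle, is verifying the admissibility condition~\eqref{Selflex} at those shifts whose first digit equals~$c_1$. By the previous paragraph these arise only at third-case indices~$i$, where $c_{i+1} = c_1$ and, by self-reciprocity, also $c_{2n+1-i} = c_1$. For the forward shift the next digit is $c_{i+2}$, and a direct computation from the recurrence gives
\[
c_2 - c_{i+2} = (\beta + 1/\beta)(g_1 - g_{i+1}) + g_{i+2} - g_2,
\]
so the reserve $g_1 - g_{i+1} > u$ together with $\beta > 2/u$ yields $c_2 - c_{i+2} > 0$. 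For the palindromic partner shift the cycle symmetry reduces the next digit to $c_i$, and the dual identity $g_1 - g_{i-1} = g_{i+1} > u$ produces the analogous strict inequality $c_i < c_2$. These two strict inequalities at the second position close the lexicographic comparison; every other shift starts with some $c_k < c_1$ or with $c_1 - 1$, so~\eqref{Selflex} holds throughout and the displayed word is indeed $d_\beta(1)$.
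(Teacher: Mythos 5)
Your argument is correct and matches the paper's proof: the paper likewise reuses the Lemma~\ref{Str1} computation and only adds the two observations $c_1-c_{i+1}=g_{i-1}+g_{i+1}-g_1$ and $c_2-c_{i+2}=(\beta+1/\beta)(g_1-g_{i+1})-1-g_2+g_i+g_{i+2}>\beta u-2>0$ at the tied indices. Your separate check of the palindromic partner (via $g_1-g_{i-1}=g_{i+1}>u$) is a correct but redundant extra: the partner index $2n-i$ is itself a third-case index, so it is already covered by the same forward computation.
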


\begin{proof}
If $g_i=1\ \text{and}\ g_1<g_{i-1}+g_{i+1}$ holds, 
then $c_1-c_{i+1}=g_{i-1}+g_{i+1}-g_1>0$.
If $g_i=1\ \text{and}\ g_1=g_{i-1}+g_{i+1}\  \text{and} \ g_1-g_{i+1}>u$ holds,
then $c_1=c_{i+1}$ and $c_2-c_{i+2}=(\beta+1/\beta)(g_1-g_{i+1})-1-g_2 +g_{i}+g_{i+2}>\beta u-2>0$. 
Therefore the lexicographic condition (\ref{Selflex}) holds
in all cases.
\end{proof}

We shall see later that the
effect of this small extension of Lemma \ref{Str1} is pretty large both in theory and in practice. Indeed, Lemma \ref{Str2} works very
well with discretized rotation algorithm, see the discussion in \S 5.

\section{Proof of Theorem \ref{main}}
\label{Proof1}

We start with a 

\begin{lem}
\label{Circle}
For a positive even integer $2n$, all roots of the self-reciprocal polynomial
$$
P(x)=x^{2n}+ d_{2n-1} x^{2n-1}+\dots + d_1 x +1 \quad  (d_{2n-i}=d_i)
$$
with $d_i\in \R$ and $|d_i|<1/(2n-2)$ are on the unit circle. 
\end{lem}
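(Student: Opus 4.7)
The plan is to reduce the polynomial statement to locating $2n$ sign changes of a real trigonometric polynomial on the unit circle. First I would substitute $x=e^{i\theta}$ and divide by $e^{in\theta}$; the self-reciprocity $d_{2n-i}=d_i$ pairs opposite exponents into cosines, so
$$
F(\theta):=P(e^{i\theta})\,e^{-in\theta}=2\cos(n\theta)+E(\theta),\qquad E(\theta):=d_n+2\sum_{k=1}^{n-1}d_{n-k}\cos(k\theta)
$$
is a real-valued continuous function on $\R$, and its zeros in $[0,2\pi)$ are in bijection with the roots of $P$ on the unit circle. The task therefore becomes showing that $F$ has at least $2n$ zeros there.

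The second step is to show that the perturbation $E(\theta)$ is uniformly smaller in absolute value than the peaks of the dominant term $2\cos(n\theta)$. Using the hypothesis $|d_i|<1/(2n-2)$ for every $i$ (which implicitly requires $n\ge 2$), the triangle inequality gives
$$
|E(\theta)|\le |d_n|+2\sum_{k=1}^{n-1}|d_{n-k}|<\frac{1+2(n-1)}{2n-2}=\frac{2n-1}{2n-2}<2
$$
uniformly in $\theta$. I would then evaluate $F$ at the $2n$ equally spaced nodes $\theta_j=j\pi/n$, $j=0,1,\ldots,2n-1$; since $2\cos(n\theta_j)=2(-1)^j$, the strict bound above forces $\mathrm{sgn}\,F(\theta_j)=(-1)^j$, so $F$ alternates in sign along the $\theta_j$.

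The intermediate value theorem then produces at least one zero of $F$ in each of the $2n$ disjoint open intervals $(\theta_j,\theta_{j+1})$ (with $\theta_{2n}:=2\pi$), yielding at least $2n$ distinct zeros in $[0,2\pi)$. Since $P$ has degree exactly $2n$ and thus at most $2n$ roots on the unit circle counted with multiplicity, all of its roots lie there (and are automatically simple), as required. The only delicate point is that the bound at the nodes be \emph{strict}, so that no $\theta_j$ is itself a zero of $F$ and the alternations produce genuine interior zeros; this is automatic from the strict hypothesis $|d_i|<1/(2n-2)$ together with the elementary estimate $(2n-1)/(2n-2)<2$ for $n\ge 2$. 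Otherwise the argument is a textbook intermediate-value computation and I expect no substantive obstacle.
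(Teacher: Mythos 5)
Your proof is correct and is essentially the paper's own argument written in different coordinates: the paper passes to the trace variable $y=x+x^{-1}$ and applies the intermediate value theorem to $G_1(y)=P(x)/x^{n}$ at the points $y=2\cos(\pi k/n)$, which are exactly your nodes $\theta_j=j\pi/n$ under the substitution $x=e^{i\theta}$, using the same triangle-inequality bound $(2n-1)/(2n-2)<2$ against the alternating values $2(-1)^k$ of the dominant term. No gaps.
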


\begin{proof}
Let $G_0(y):=(x^{2n}+1)/x^{n}$ and $G_1(y):=P(x)/x^{n}$ with $y=x+x^{-1}$. 
Since $$G_0(2\cos(\pi k /n))=2(-1)^k$$ for $k=0,1,2,\dots, n$, 
$G_0(y)$ has $n$ real roots $\psi_i\ (i=1,2,\dots, n)$ with
\begin{align*}
2&>\psi_1>2\cos\left(\frac{\pi}n\right)>\psi_2>2\cos\left(\frac{2\pi}n\right)>\psi_3>\dots\\
&\dots> 2\cos\left(\frac{(n-2)\pi}n\right)>\psi_{n-1}>
2\cos\left(\frac{(n-1)\pi}n\right)>\psi_n>-2.
\end{align*}
From $|d_i|<1/(2n-2)$, we have $G_1(2\cos(\pi k /n))<0$ for odd $k$ and
$G_1(2\cos(\pi k /n))>0$ for even $k$.
By intermediate value theorem, 
$G_1(y)$ has $n$ real roots $\psi'_i\ (i=1,\dots,n)$ in $(-2,2)$
satisfying the same inequality as $\psi_i$. Therefore
we have
$$G_1(y)=\prod_{i=1}^{n} (y-\psi'_i).$$
Coming back to $P(x)$ we get the assertion.
\end{proof}

For degree 4, 
we have nothing to do since Boyd \cite{Boyd1989Degree4} showed that every Salem number of degree 4 is a $(1,p)$-periodic Parry number for some $p\in \N$.
Consider a polynomial
$$
h(x)=x^{2n}+ \frac 1{4(n-1)}\sum_{i=1}^{2n-1} x^i +1.
$$
By Lemma \ref{Circle}, we have
$$
h(x)=\prod_{i=1}^{n} \left(x-\exp(\eta_i \sqrt{-1})\right)\left(x-\exp(-\eta_i \sqrt{-1})\right)$$
with 
\begin{align}
\label{Interlace}
2&>2\cos(\eta_1)>2\cos\left(\frac{\pi}n\right)>2\cos(\eta_2)>2\cos\left(\frac{2\pi}n\right)>2\cos(\eta_3)>\dots\\ 
&\dots> 2\cos\left(\frac{(n-2)\pi}n\right)>2\cos(\eta_{n-1})>
2\cos\left(\frac{(n-1)\pi}n\right)>2\cos(\eta_{n})>-2.
\nonumber
\end{align}
This type of discussion is called ``interlacing" and efficiently
used in the construction
of Salem numbers having desired properties, see \cite{Smyth2000,Smyth2005,Akiyama-Kwon:08} and its references.

Considering coefficients as a continuous function of roots,
there exists a constant $\varepsilon>0$ that if
$$
\psi_i \in [\eta_i-\varepsilon, \eta_i+\varepsilon]
$$
for $i=1,2,\dots, n$, then we have
\begin{equation}
\label{Key}
\prod_{i=1}^{n} \left(x-\exp(\psi_i \sqrt{-1})\right)\left(x-\exp(-\psi_i \sqrt{-1})\right)
=x^{2n}+1+\sum_{i=1}^{2n-1} g_i x^i 
\end{equation}
with $\frac {1}{6(n-1)}<g_i<\frac {1}{3(n-1)}$. See Remark \ref{Cd} for the 
choice of $\varepsilon$.

Let $\beta$ be a Salem number of degree $2n+2$ with $n\ge 2$ and let
$\theta_i\in (0,\pi)\ (i=1,\dots,n)$ 
be the arguments of the conjugates of $\beta$ 
on the unit circle determined as in (\ref{FactorSalem}).
Since $1, \theta_1/\pi,\dots, \theta_{n}/\pi$ are linearly independent over $\Q$,
by Kronecker's approximation Theorem, we find
infinitely many positive integer $m$'s such that
\begin{equation}
\label{Rot}
\frac{m \theta_i}{2\pi} \pmod{\Z} \in [\eta_i-\varepsilon, \eta_i+\varepsilon]
\end{equation}
hold for $i=1,2,\dots,n$.
For an integer $m$ with this property, 
the minimum polynomial of $\beta^m$ has the form
$$
\left(x-\beta^m\right)\left(x-\frac 1{\beta^m}\right)\left(
x^{2n}+1+\sum_{i=1}^{2n-1} g^{(m)}_i x^i \right)
$$
and 
$\frac {1}{6(n-1)}<g^{(m)}_i <\frac {1}{3(n-1)}$
for $i=1,2,\dots, 2n-1$. By Lemma \ref{Str1}, we see that
$d_{\beta^m}(1)$ is $(1,2n+1)$-periodic for sufficiently large $m$.
Finally, we show that
$$
\{m\in \N\ | \ d_{\beta^m}(1) \text{ is $(1,2n+1)$-periodic} \}
$$
has positive lower natural density.
Using the fact that (\ref{TorusRot})
is uniformly distributed in $(\R/\Z)^n$, we have 
$$
\liminf_{M\to \infty} \frac 1M \mathrm{Card}\left\{ m\in [1,M]\cap \N\ 
|\ d_{\beta^m}(1) \text{ is $(1,2n+1)$-periodic} \right\} \ge (2\varepsilon)^n
$$
in view of (\ref{Rot}), giving the lower bound of the natural density.
\qed

\begin{rem}
\label{Cd}
We give a lower bound of $c(d)$ from the above proof.
First, we compute the 
asymptotic expansion of $\eta_i$ with respect to $n$, when $\eta_i$ is close to
$\pi/2$ (c.f. \cite{Akiyama:16}).
If $n$ is odd, then
we have
$$
\eta_{\lfloor n/2 \rfloor}= \frac{\pi}2+ \frac {1}{8n^2}+\frac {9}{64n^3}+ O\left(\frac 1{n^4}\right)
$$
which leads to $\tan(\eta_{\lfloor n/2 \rfloor})=-8 n^2+O(n)$.
Further, we have
$$
\eta_{\lfloor n/2 \rfloor\pm 1}= \frac{\pi}2 \pm \frac{\pi}{n}+ \frac {1}{8n^2}+\frac {9\mp 8\pi}{64n^3}+ O\left(\frac 1{n^4}\right)
$$
and
$$
\tan(\eta_i)=O(n)
$$
for $|i-\lfloor n/2\rfloor|>1$ in light of (\ref{Interlace}).
If $n$ is even, then we have 
$$
\eta_{n/2 \pm 1}= \frac{\pi}2 \pm \frac{\pi}{2n}+ \frac {1}{8n^2}+\frac {9\mp 4\pi}{64n^3}+ O\left(\frac 1{n^4}\right).
$$
Thus $\tan(\eta_i)=O(n)$ is valid
for all $i$ by (\ref{Interlace}).
%Therefore there exists a positive constant $\tau$ that 
%$\sum_{i=1}^n |\tan(\eta_i)|\le \tau n^2$ holds regardless of the parity of 
%$n$.

Second, comparing the coefficients of $\prod_{i=1}^n (x^2+2\cos(\eta_i+\nu) x+ 1)$
and those of $\prod_{i=1}^n (x^2+2\cos(\eta_i) x+ 1)$, 
we see that the desired inequality of (\ref{Key}) holds if
\begin{equation}
\label{ProdEst}
\frac 23<\prod_{i=1}^n \left|\frac{\cos(\eta_i+\nu)}{\cos(\eta_i)}\right| < \frac 43
\end{equation}
for $|\nu|<\eps$.
Using an easy inequality
$$
1-|\tan(x)y|-\frac{y^2}2<
\left|\frac{\cos(x+y)}{\cos(x)}\right|<1+|\tan(x) y|
$$
for $\cos(x)\neq 0$ and above estimates on $\eta_i$, 
we see that
there exists a positive constant $\kappa$ so that
if $|\eps|<\kappa/n^2$, then (\ref{ProdEst}) holds. Thus
we obtain $c(d)\ge (\kappa/n^{2})^{n}$ with $d=2n+2$. Making explicit 
the implied constants of Landau symbols, we see that $\kappa=1/32$ suffices.
Thus we have $c(d)\ge (32(d/2)^2)^{-d/2}>(3d)^{-d}$.
\end{rem}

%\begin{rem}
%\label{Method1}
%Our method works similarly for a point
%$(\theta_1,\dots,\theta_n)\in (0,\pi)^n$
%with $0\le g_i<1$ for $i=1,\dots,2n-1$ in (\ref{Key}).
%The essence of the above proof is to show 
%that the set of such $(\theta_1,\dots,\theta_n)$'s 
%contains an inner point.
%\end{rem}

\section{Proof of Theorem \ref{main2}}
\label{Proof2}
For the rest of this paper, we deal with a Salem number $\beta$ 
of degree 6. For simplicity of presentation, 
we prove that $c(6)\ge 0.458$. To show 
that $c(6)$ exceeds $0.5$, 
we have to use more polynomials and the computation 
becomes much harder, see Appendix 1.

To apply the discussion after
Lemma \ref{Str1}, there exists an efficient practical 
way to find $R(x)$
which is similar to the shift radix system
(c.f. \cite{Akiyama-Borbely-Brunotte-Pethoe-Thuswaldner:04}).
Pick a random $(\alpha_1,\alpha_2)\in(-2,2)^2$ and start with a coefficient vector $(1,c_1,c_2,c_1,1)$ of
$$
(x^2+\alpha_1 x+1)(x^2+\alpha_2 x+1)=x^4+c_1x^3+c_2 x^2+c_1 x+1,
$$
i.e., $c_1=\alpha_1+\alpha_2$, $c_2=\alpha_1 \alpha_2 +2$.
We wish to iterate the shifted addition like
\begin{align*}
(1,c_1,c_2,c_1,1) & \rightarrow
(1,c_1,c_2,c_1,1,0)+k (0, 1,c_1,c_2,c_1,1)\\
&=(1, c_1+k, c_2+c_1 k, c_1+c_2 k, 1+c_1 k ,k)
\end{align*}
with $k=-\lceil c_1-1\rceil$, to find a longer coefficient vector that all entries except the first and the last ones fall in $(0,1]$
as in Lemma \ref{Str2}. To make this idea into an algorithm, set 
$z_{-1}=(0,0,0,0)$ and for $z_n=(t_n(1),t_n(2),t_n(3),t_n(4))$, we define 
\begin{equation}
\label{ShiftedAddition}
z_{n+1}:=(t_n(2)+ c_1k_{n+1}, t_n(3)+ c_2k_{n+1}, t_n(4)+c_1 k_{n+1}, k_{n+1})
\end{equation}
with $k_{n+1}=-\lceil t_n(1)-1 \rceil$. Thus we see
$k_0=1$ and $z_0=(c_1,c_2,c_1,1)$.
We stop this iteration when $0<t_n(i)\le 1$ for $i=1,2,3$ and $t_n(4)=1$ (i.e. $k_{n}=1$) and 
obtain a candidate $R(x)=\sum_{i=0}^{n} k_i x^i$. 
It is natural to set $k_{n+1}=k_{n+2}=k_{n+3}=k_{n+4}=0$ and we obtain
$$
(x^4+c_1x^3+c_2x^2+c_1x+1)R(x)=\sum_{i=0}^{n+4} g_i x^i
$$
with
$g_0=g_{n+4}=1$ and $g_i=t_{i-1}(1)+k_i\in (0,1]\ (i=1,\dots,n+3)$. 
If $n$ became larger than a given threshold, then we restart with a different
$(\alpha_1,\alpha_2)$.
Applying this random search, we find polynomials $R_i(x)\ (i=1,\dots, 18)$ so
that Lemma \ref{Str1} give relatively large regions $\RR_i\ (i=1,\dots,18)$
defined by (\ref{DefIneq}).
\begin{align*}
R_1=&1,\\
R_2=&1+x,\\
R_3=&1+x^2,\\
R_4=&1+2 x+x^2,\\
R_5=&1+x+x^2+x^3,\\
R_6=&1+2 x+2 x^2+x^3,\\
R_7=&1-x+x^2-x^3+x^4,\\
R_8=&1+2 x+2 x^2+2 x^3+2x^4+x^5,\\
R_9=&1+x-x^2-x^3+x^4+x^5,\\
R_{10}=&1+2 x^2+2 x^4+x^6,\\
R_{11}=&1+x+x^2+2 x^3+x^4+x^5+x^6,\\
R_{12}=&1+x+x^2+2 x^3+2 x^4+x^5+x^6+x^7,\\
R_{13}=&1+x+2 x^2+2 x^3+2 x^4+2 x^5+x^6+x^7,\\
R_{14}=&1+2 x+x^2-x^3-x^4+x^5+2 x^6+x^7, \\
R_{15}=&1+x^2-x^3+x^4-x^5+x^6+x^8,\\
R_{16}=&1-2x+2x^2-x^3+x^4-2x^5+3x^6-2x^7+x^8-x^9+2x^{10}-2x^{11}+x^{12},\\
R_{17}=&1+3x+4x^2+3x^3+x^4,\\
R_{18}=&1+3x+4x^2+2x^3-2x^4-4x^5-2x^6+2x^7+4x^8+3x^9+x^{10}.
\end{align*}
%We can check that the corresponding 18 regions have non-empty interiors. 
For example,
$$
(x^4+c_1x^3+c_2x^2+c_1x+1)R_8=x^9+(c_1+2)x^8+\dots+(c_1+2)x+1
$$
gives the coefficient vector
$$
(1,c_1+2,2 c_1+c_2+2,3 c_1+2 c_2+2,4 c_1+2 c_2+3,
4 c_1+2 c_2+3,3 c_1+2 c_2+2,2 c_1+c_2+2,c_1+2,1)
$$
which gives rise to a system of linear inequalities
\begin{align*}
0<c_1+2\leq 1,\ 0<2 c_1+c_2+2\leq 1,\ 0<3 c_1+2 c_2+2\leq 1,\ 0<4 c_1+2 c_2+3\leq 1.
\end{align*}
Solving this system, we obtain a triangular region:
\begin{align*}
\bullet&\  c_1 >\frac{-2 c_2-3}{4} &\text{if}\qquad &c_2\in \left(\frac 12,\frac 52\right),\\
\bullet&\  c_1 \leq -1 &\text{if}\qquad &c_2\in \Big(\frac{1}{2}, 1\Big],\\
\bullet&\ c_1\leq \frac{-2 c_2-1}{3}& \text{if}\qquad &c_2\in\left(1,\frac{5}{2}\right).
\end{align*}
This collection of three sentences with $\bullet$ reads $c_2$ 
must be in at least one of the intervals, and we
take logical ``and" of the three.
Replacing $(c_1,c_2)$ by
$(\alpha_1+\alpha_2,\alpha_1\alpha_2+2)$, we can confirm that
$-2<\alpha_i<2$ for $i=1,2$ hold
in this triangle.\footnote{For a general $R(x)$, we compute the intersection with the region $-2<\alpha_i<2\ (i=1,2)$.
If this intersection is empty, then we have to 
restart with a different $(\alpha_1,\alpha_2)$.}
Thus we find the two curvilinear triangles $\RR_{8}$ in Figure \ref{SalemParry} bounded by segments and hyperbola.
We also found polynomials $L_i\ (i=1,\dots, 5)$ giving large regions $\LL_i\ (i=1,\dots, 5)$
where Lemma \ref{Str1} does not apply and Lemma \ref{Str2} is necessary.
\begin{align*}
L_1=&1-x^2+x^3+x^4-x^5+x^7,\\
L_2=&1-x+x^3-x^5+x^6, \\
L_3=&1-x^2+x^3+x^6-x^7+x^9,\\
L_4=&1-2x+2x^2-2x^4+3x^5-2x^6+2x^8-2x^9+x^{10},\\
L_5=&1-x+x^3-x^6+x^7+x^8-x^9+x^{12}-x^{14}+x^{15}.
\end{align*}
For example, the coefficients of $x^3$ and $x^8$ in
\begin{align*}
&(x^4+c_1 x^3+c_2x^2+c_1 x+1)L_1=\\
&\quad 1+c_1 x-x^2+c_2 x^2+x^3+2 x^4+c_1 x^4-c_2 x^4-x^5+c_2 x^5-x^6+c_2 x^6\\
&\quad +2 x^7 +c_1 x^7-c_2 x^7+x^8-x^9+c_2 x^9+c_1 x^{10}+x^{11}
\end{align*}
are equal to $1$, we have to use Lemma \ref{Str2}.

%This random search is more efficiently executed if 
%we exclude points $(\alpha_1,\alpha_2)$ in the found period cells 
%from the next trials.
%Moreover in this way, we have better chances 
%that the obtained period cells are mutually disjoint.
%Indeed 

We can check directly
that those 23 sets are mutually disjoint. 
We will see later in \S \ref{Four} that Lemma \ref{Str2} works fine
for any period and this disjointness is natural. Indeed,
if two period cells share an inner point, 
then their periods of the
corresponding discretized rotations must coincide, since $d_{\beta^m}(1)$ for a Salem number $\beta$
is uniquely determined by $m$.
This implies that the period cells must be identical.
See Figure \ref{SalemParry} and the explicit computation of period cells below.

\begin{figure}[htb]
\includegraphics[width=\columnwidth]{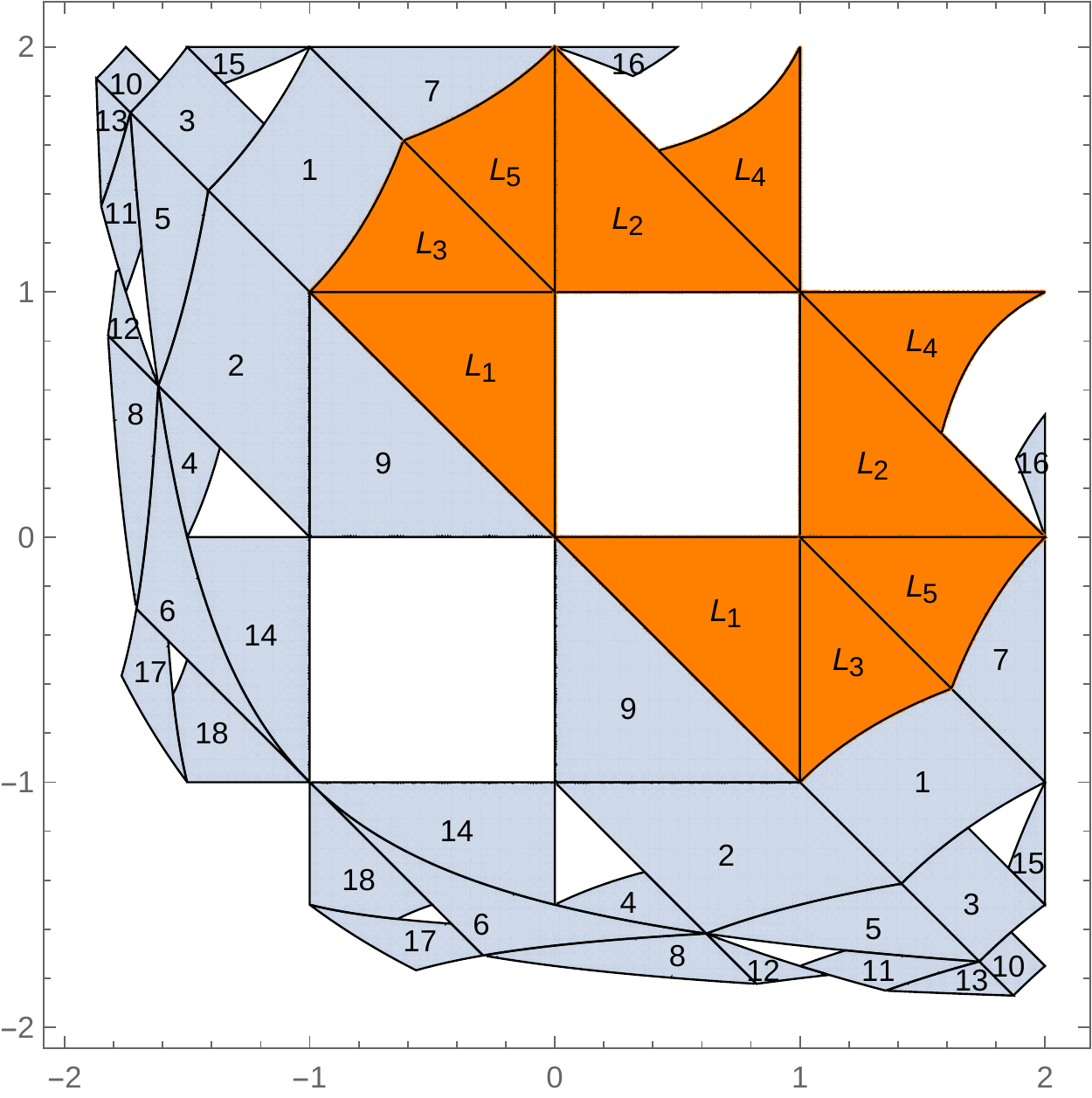}
\caption{Period cells in $(\alpha_1,\alpha_2)$ coordinate \label{SalemParry}}
\end{figure}

Choose a small $\eps>0$  
and consider the subset $\RR_i(\eps)$ of solutions  of the system of inequalities
$$
\eps<g_i<1-\eps \ (i=1,2,\dots, 2n-1),\quad -2<\alpha_j<2\ (j=1,\dots,d-1)
$$
for a polynomial $R_i\ (i=1,\dots, 18)$.
We also define $\LL_i(\eps)$ by  the system of inequalities
$$
\eps<g_i<1-\eps \ (i\in \{j \in [1,2n-1]\ |\ g_j\not = 1\}),
\quad -2<\alpha_j<2\ (j=1,\dots,d-1)
$$
for a polynomial $L_i\ (i=1,\dots,5)$. Since $g_i\in \Z+(\alpha_1+\alpha_2)\Z+\alpha_1\alpha_2\Z$,
$g_i$ is a constant 
if and only if $g_i=1$. Therefore unless $g_i=1$, 
the boundary equalities $g_i=\eps$ and $g_i=1-\eps$ give 
one-parameter families of (linear or hyperbolic) curves, which continuously
move along $\eps$.
In light of Lemma \ref{Str1} and \ref{Str2}, there exist $m_0=m_0(\eps)\in \N$ that 
if $m\ge m_0$ and $(-2\cos(m\theta_1/2\pi),-2\cos(m\theta_2/2\pi))$ falls into
$$
S(\eps):=\left(\bigcup_{i=1}^{18} \RR_i(\eps) \right) \cup 
\left(\bigcup_{i=1}^{5} \LL_i(\eps) \right),
$$
then $d_{\beta^m}(1)$ is $(1,p)$-periodic with some $p\in \N$.
Since $(m \theta_1/2\pi,m\theta_2/2\pi) \pmod{\Z^2}$ is uniformly distributed
and $S(\eps)$ is Jordan measurable,
the induced probability measure
is computed by
$$
\frac 1{\pi^2}\int\int_{S(\eps)} \frac {d\alpha_1 d\alpha_2}{\sqrt{(4-\alpha_1^2)(4-\alpha_2^2)}},
$$
since the normalized
Lebesgue measure $\frac{1}{2\pi}
d\theta$ on the unit circle
is projected 2 to 1 to the interval $[-2,2]$
by the map $\theta\mapsto -2\cos(\theta)=:\alpha$ and
$$\frac {2}{2\pi} d \theta
= \frac {1}{\pi}d(\arccos(-\alpha/2))
= \frac {1}{\pi}d(\pi-\arccos(\alpha/2))
%= -\frac {1}{\pi}d(\arccos(\alpha/2))
=\frac{1}{\pi}
\frac{d \alpha}{\sqrt{4-\alpha^2}}.
$$

Let $$S := \left(\bigcup_{i=1}^{18} \RR_i \right) \cup 
\left(\bigcup_{i=1}^{5} \LL_i \right)
$$
and $\mu$ be the 2-dimensional Lebesgue measure. Since both 
$S(\eps)$ and $S$ have piecewise smooth boundaries, 
we have
$$
\mu(S\setminus S(\eps))\to 0
$$
as $\eps\to 0$. Thus the above measure converges to 
$$
\frac 1{\pi^2}\int\int_S \frac {d\alpha_1 d\alpha_2}{\sqrt{(4-\alpha_1^2)(4-\alpha_2^2)}}\approx 0.458895
$$
which proves our theorem.
The explicit forms of the regions 
$\RR_i\ (i=1,2,\dots,18)$ and $\LL_i\ (i=1,2,\dots,5)$ are listed below, 
using the coordinate $(x,y)=(\alpha_1,\alpha_2)$ with $y<x$. Each collection of
sentences with $\bullet$ is read in the similar way as before.
%Since we are not interested in belongingness of the boundary to period cells, 
%we only desribe their interiors in this list.

\begin{itemize}
\item $\RR_{1}$:
\begin{itemize}

\item $y \leq -\frac{1}{x},\quad \mathrm{if}\  x\in\left[1,\frac{1+\sqrt{5}}{2}\right]$
\item $y\leq 1-x, \quad \mathrm{if}\  x\in \left[\frac{1+\sqrt{5}}{2},2\right] $
\item $y>-x, \quad \mathrm{if}\  x\in \left[1,\sqrt{2}\right] $
\item $y>-\frac{2}{x}, \quad \mathrm{if}\  x\in \left[\sqrt2,2\right]$

\end{itemize}
\item $\RR_{2}$:

\begin{itemize}
\item $y\leq -1, \quad \mathrm{if}\   x\in\left[0,1\right]$
\item $y\leq -x, \quad \mathrm{if}\  x\in \left[1,\sqrt{2}\right] $
\item $y>-1-x, \quad \mathrm{if}\  x\in \left[0,\frac{\sqrt{5}-1}{2}\right] $
\item $y>-\frac{2+x}{1+x}, \quad \mathrm{if}\  x\in \left[\frac{\sqrt{5}-1}{2},\sqrt{2}\right]$
\end{itemize}
\item $\RR_{3}$:

\begin{itemize}
\item $y\leq \frac{-2}{x}, \quad \mathrm{if}\   x\in\left[\sqrt{2},\frac{1+\sqrt{33}}{4}\right]$
\item $y\leq \frac{1}{2}-x, \quad \mathrm{if}\  x\in \left[\frac{1+\sqrt{33}}{4},2\right] $
\item $y>-x, \quad \mathrm{if}\  x\in \left[\sqrt{2},\sqrt{3}\right] $
\item $y>\frac{-3}{x}, \quad \mathrm{if}\  x\in \left[\sqrt{3},2\right]$
\end{itemize}
 
\item $\RR_{4}$:

\begin{itemize}
\item $y\leq -\frac{3+2x}{2+2x}, \quad \mathrm{if}\   x\in\left[0,\frac{\sqrt{3}-1}{2}\right]$
\item $y\leq -1-x, \quad \mathrm{if}\  x\in \left[\frac{\sqrt{3}-1}{2},\frac{\sqrt{5}-1}{2}\right] $

\item $y>-\frac{3+2x}{2+x}, \quad \mathrm{if}\  x\in \left[0,\frac{\sqrt{5}-1}{2}\right]$
\end{itemize}

\item $\RR_{5}$:

\begin{itemize}
\item $y\leq -\frac{2+x}{1+x}, \quad \mathrm{if}\   x\in\left[\frac{\sqrt{5}-1}{2},\sqrt{2}\right]$
\item $y\leq -x, \quad \mathrm{if}\  x\in \left[\sqrt{2},\sqrt{3}\right] $

\item $y>-\frac{3+2x}{2+x}, \quad \mathrm{if}\  x\in \left[\frac{\sqrt{5}-1}{2},\sqrt{3}\right]$

\end{itemize}

 \item $\RR_{6}$:

\begin{itemize}
\item $y\leq -\frac{3+2x}{2+x}, \quad \mathrm{if}\   x\in\left[-1,\frac{\sqrt{5}-1}{2}\right]$
\item $y>-2-x, \quad \mathrm{if}\  x\in \left[-1,\frac{\sqrt{2}-2}{2}\right] $

\item $y>-\frac{5+3x}{3+2x}, \quad \mathrm{if}\  x\in \left[\frac{\sqrt{2}-2}{2},\frac{\sqrt{5}-1}{2}\right]$
\end{itemize}

\item $\RR_{7}$:

\begin{itemize}
\item $1-x<y\leq \frac{2-x}{2+x}, \quad \mathrm{if}\   x\in \Big[\frac{\sqrt{5}+1}{2},2\Big)$
%\item $y>1-x, \quad \mathrm{if}\  x\in \Big[\frac{\sqrt{5}+1}{2},2\Big) $
\end{itemize}

\item $\RR_{8}$:

\begin{itemize}
\item $y\leq -\frac{5+3x}{3+2x}, \quad \mathrm{if}\  x\in \left[\frac{\sqrt{2}-2}{2},\frac{\sqrt{5}-1}{2}\right]$

\item $y\leq -1-x, \quad \mathrm{if}\  x\in \left[\frac{\sqrt{5}-1}{2},\frac{\sqrt{7}-1}{2}\right] $
\item $y>-\frac{7+4x}{4+2x}, \quad \mathrm{if}\   x\in\left[\frac{\sqrt{2}-2}{2},\frac{\sqrt{7}-1}{2}\right]$

\end{itemize}

\item $\RR_{9}$:

\begin{itemize}

\item $-1<y\leq -x, \quad \mathrm{if}\  x\in \left(0,1\right] $

%\item $y>-1, \quad \mathrm{if}\   x\in\left(0,1\right]$

\end{itemize}

\item $\RR_{10}$:

\begin{itemize}

\item $y\leq \frac{-3}{x}, \quad \mathrm{if}\   x\in\left[\sqrt{3},\frac{1+\sqrt{193}}{8}\right]$
\item $y\leq \frac{1}{4}-x, \quad \mathrm{if}\  x\in \left[\frac{1+\sqrt{193}}{8},2\right] $
\item $y>-x, \quad \mathrm{if}\  x\in \left[\sqrt{3},\sqrt{\frac{7}{2}}\right] $
\item $y>\frac{-7}{2x}, \quad \mathrm{if}\  x\in \left[\sqrt{\frac{7}{2}},2\right]$
\end{itemize}

\item $\RR_{11}$:

\begin{itemize}
\item $y\leq -\frac{5+2x}{2+2x}, \quad \mathrm{if}\   x\in\left[\frac{\sqrt{111}-3}{8},\frac{\sqrt{33}-1}{4}\right]$

\item $y\leq -\frac{3+2x}{2+x}, \quad \mathrm{if}\  x\in \left[\frac{\sqrt{33}-1}{4},\sqrt{3}\right]$

\item $y>-\frac{4+3x}{3+x}, \quad \mathrm{if}\   x\in\left[\frac{\sqrt{111}-3}{8},\frac{\sqrt{41}-1}{4}\right]$

\item $y>-\frac{3+x}{1+x}, \quad \mathrm{if}\  x\in \left[\frac{\sqrt{41}-1}{4},\sqrt{3}\right]$

\end{itemize}

 \item $\RR_{12}$:

\begin{itemize}

\item $y\leq -\frac{4+3x}{3+x}, \quad \mathrm{if}\  x\in \left[\frac{\sqrt{5}-1}{2},\frac{\sqrt{19}-1}{3}\right]$
\item $y>-1-x, \quad \mathrm{if}\  x\in \left[\frac{\sqrt{5}-1}{2},\frac{\sqrt{7}-1}{2}\right] $
\item $y>-\frac{6+3x}{3+2x}, \quad \mathrm{if}\   x\in\left[\frac{\sqrt{7}-1}{2},\frac{\sqrt{19}-1}{3}\right]$

\end{itemize}

 \item $\RR_{13}$:

\begin{itemize}

\item $y\leq -\frac{3+x}{1+x}, \quad \mathrm{if}\  x\in \left[\frac{\sqrt{41}-1}{4},\sqrt{3}\right]$

\item $y\leq -x, \quad \mathrm{if}\  x\in \left[\sqrt{3},\sqrt{\frac{7}{2}}\right] $
\item $y>-\frac{7+4x}{4+2x}, \quad \mathrm{if}\   x\in\left[\frac{\sqrt{41}-1}{4},\sqrt{\frac{7}{2}}\right]$

\end{itemize}

 \item $\RR_{14}$:

\begin{itemize}

\item $-\frac{3+2x}{2+x}<y\leq -1, \quad \mathrm{if}\   x\in \left[-1,0\right]$
%\item $y>-\frac{3+2x}{2+x}, \quad \mathrm{if}\  x\in \left[-1,0\right] $

\end{itemize}

 \item $\RR_{15}$:

\begin{itemize}

\item $\frac{1}{2}-x<y\leq \frac{3-x}{1-x}, \quad \mathrm{if}\  x\in \Big[\frac{\sqrt{41}+1}{4},2\Big) $

%\item $y>, \quad \mathrm{if}\   x\in \Big[\frac{\sqrt{41}+1}{4},2\Big)$

\end{itemize}

 \item $\RR_{16}$:

\begin{itemize}

\item $\frac{6-3x}{3-x}<y\leq \frac{7-4x}{4-3x}, \quad \mathrm{if}\   x\in \Big[\frac{11+\sqrt{61}}{10},2\Big)$

%\item $y>\frac{6-3x}{3-x}, \quad \mathrm{if}\  x\in \Big[\frac{11+\sqrt{61}}{10},2\Big)$

\end{itemize}

 \item $\RR_{17}$:

\begin{itemize}
\item $y\leq -\frac{8+5x}{5+3x}, \quad \mathrm{if}\   x\in\left[-1,\frac{\sqrt{2}-3}{3}\right]$

\item $y\leq -2-x, \quad \mathrm{if}\  x\in \left[\frac{\sqrt{2}-3}{3},\frac{\sqrt{2}-2}{2}\right]$

\item $y>-\frac{6+3x}{3+x}, \quad \mathrm{if}\  x\in \left[-1,\frac{\sqrt{13}-7}{6}\right]$
\item $y>-\frac{5+3x}{3+2x}, \quad \mathrm{if}\   x\in\left[\frac{\sqrt{13}-7}{6},\frac{\sqrt{2}-2}{2}\right]$

\end{itemize}

\item $\RR_{18}$:

\begin{itemize}

\item $y\leq -2-x, \quad \mathrm{if}\  x\in \left[-1,\frac{-1}{2}\right]$
\item $y>-\frac{8+5x}{5+3x}, \quad \mathrm{if}\  x\in \left[-1,\frac{\sqrt{21}-11}{10}\right]$
\item $y>-\frac{7+5x}{5+4x}, \quad \mathrm{if}\   x\in\left[\frac{\sqrt{21}-11}{10},\frac{-1}{2}\right]$

\end{itemize}

\item $\LL_1$:

\begin{itemize}
\item $-x<y\le 1, \quad \mathrm{if}\ x\in (0,1]$
\end{itemize}

\item $\LL_2$:

\begin{itemize}
\item $0<y\le 2-x, \quad \mathrm{if}\ x\in (1,2)$
\end{itemize}

\item $\LL_3$:

\begin{itemize}

%\item $y>-\frac{1}{x}, \quad \mathrm{if}\  x\in \Big(1,\frac{1+\sqrt{5}}{2}\Big]$
\item $-\frac{1}{x}<y\leq 1-x, \quad \mathrm{if}\   x\in\Big(1,\frac{1+\sqrt{5}}{2}\Big]$

\end{itemize}

 \item $\LL_4$:

\begin{itemize}

\item $y\leq 1, \quad \mathrm{if}\  x\in \left[1,2\right]$
\item $y>2-x, \quad \mathrm{if}\  x\in \left[1,\frac{3+\sqrt{2}}{3}\right]$
\item $y>\frac{6-4x}{4-3x}, \quad \mathrm{if}\   x\in\left[\frac{3+\sqrt{2}}{3},2\right]$

\end{itemize}

 \item $\LL_5$:

\begin{itemize}

\item $y\leq 0, \quad \mathrm{if}\  x\in \left[1,2\right]$

\item $y>1-x, \quad \mathrm{if}\  x\in \left[1,\frac{1+\sqrt{5}}{2}\right]$
\item $y>\frac{2-x}{1-x}, \quad \mathrm{if}\   x\in\left[\frac{1+\sqrt{5}}{2},2\right]$

\end{itemize}

\end{itemize}

\begin{rem}
By examining the beta expansion of  23899  Salem numbers of degree 6 and trace at most 19, 
there are 18250 (about $76\%$)  Salem numbers that satisfy 
$$
 - 2 < \alpha_1 < 0 < \alpha_2 < 2.
$$
They are  Parry  numbers with relatively small orbit size $(\max(m,p)<1000)$. 
%Since the probability for $\alpha_1 \alpha_2<0$ is $1/2$; which is computed as in the proof of Theorem \ref{main2}, 
% we guess $c(6)$ could be $1/2$. However,  because of the large diversity in  the values  $(m,p)$,
%it seem difficult to make this rigorous. 

A heavy computational effort may be required to substantially 
improve Theorem \ref{main2} (or Proposition \ref{epnsion of reggular} in Appendix 2).
For example,
$
-r<\alpha_1<0<\alpha_2<r<2<\gamma .
$  
with $r>1$ quite close to 1, the orbit size starts 
taking many different values. 
For instance, the following polynomials: 
 
 $x^6-7x^5-3x^4-11x^3-3x^2-7x+1$,
$x^6-9x^5-x^4-11x^3-x^2-9x+1$ and   $x^6-8x^5+10x^4-15x^3+10x^2-8x+1$ satisfy respectively: $\alpha_1\approx-1.08$,  $\alpha_1\approx-1.05$  and  $\alpha_2\approx 1.1$ but $(m,p)$ equals to $(6,23)$, $(6,35)$ and $(1,119)$  respectively.
\end{rem}

\section{Four dimensional discretized rotation}
\label{Four}

Substituting variables of the algorithm (\ref{ShiftedAddition}) in \S 4 by 
$$
\begin{pmatrix} t_n(1)\cr t_n(2)\cr t_n(3) \cr t_n(4)\end{pmatrix}
= \begin{pmatrix} 1& c_1 & c_2 &  c_1 \cr
                              0& 1 & c_1 & c_2\cr
                              0 & 0& 1 & c_1\cr
                            0  & 0 & 0 & 1\end{pmatrix}  
\begin{pmatrix} k_{n-3}\cr k_{n-2}\cr k_{n-1} \cr k_n\end{pmatrix},
$$
we obtain an integer sequence $(k_n)_{n\ge -4}$
which satisfies
\begin{equation}
\label{4SRS}
0<k_{n+4}+c_1k_{n+3}+c_2 k_{n+2}+c_1k_{n+1}+k_n\le 1
\end{equation}
where $c_1=\alpha_1+\alpha_2$ and $c_2=\alpha_1\alpha_2+2$. 
Here $\alpha_i\in (-2,2)$ are arbitrary chosen constants.
The bijective map $T$ on $\Z^4$:
$$
(k_{n},k_{n+1},k_{n+2},k_{n+3}) \mapsto (k_{n+1},k_{n+2},k_{n+3}, -\lceil c_1 k_{n+3}+c_2 k_{n+2}+c_1 k_{n+1}+k_n -1\rceil )
$$
is conjugate to (\ref{ShiftedAddition}).
By bijectivity, the orbit is purely periodic if and only if it is eventually periodic. Moreover, the periodicity is equivalent to the boundedness of the orbit.
% Since $T((0,0,0,0))= (0,0,0,1)$, the initial point $z_0$ for (\ref{ShiftedAddition}) corresponds to the image of the origin and
We are interested in the recurrence of the orbit 
of $(k_{-4},k_{-3},k_{-2},k_{-1})
=(0,0,0,0)$ by $T$.
This map approximates a linear map $\Phi$ defined by
$$
 \begin{pmatrix} x_1\cr x_2\cr x_3 \cr x_4\end{pmatrix}
 \mapsto  \begin{pmatrix} 0& 1 & 0 & 0 \cr
                                             0& 0 & 1 & 0\cr
                                             0 & 0& 0 & 1\cr
                                          -1  & -c_1 & -c_2 & -c_1\end{pmatrix}  
\begin{pmatrix} x_1\cr x_2\cr x_3 \cr x_4\end{pmatrix}
$$
for $(x_1,x_2,x_3,x_4)^T\in \R^4$. 
The map $\Phi$ has four eigenvalues $\exp(\pm \theta_i \sqrt{-1})$
with $\alpha_i=-2\cos(\theta_i)$  for $i=1,2$.
Therefore the map $T:\Z^4\rightarrow \Z^4$ is understood as a discretized version of rotation. 
A simpler case:
the discretized rotation in $\Z^2$ is extensively studied in the literature. It is defined similarly 
by a recurrence 
\begin{equation}
\label{SRS}
0\le a_{n+2}+\lambda a_{n+1}+a_n<1,\quad a_n\in \Z
\end{equation}
with a fixed $\lambda\in (-2,2)$.
A notorious conjecture states that any sequence produced by this recursion is
periodic for any initial vector $(a_0,a_1)\in \Z^2$. 
The validity is known only for $11$ values of $\lambda$, 
see \cite{LHV,Akiyama-Brunotte-Pethoe-Steiner:07,KLV}.

Note that
%that we have no reason to believe that orbits in (\ref{4SRS}) are bounded when %
if $\alpha_1=\alpha_2$ then there is an unbounded real sequence $
(b_n)_{n\in \N}$
which satisfies
$$
b_{n+4}+c_1b_{n+3}+c_2 b_{n+2}+c_1b_{n+1}+b_n=0
$$
because of the shape of general terms of this recurrence.
%is $\Re((a+bn)\exp(n\theta_1 \sqrt{-1}))$
% with some $a,b\in \C$.
In particular, if $\alpha_1=\alpha_2\in \{-1,0,1\}$ then $\{T^n(0,0,0,0)|\ (n\in\N)\}$ is unbounded.
%it with $(b_n,b_{n+1},b_{n+2},b_{n+3})\in \Z^4$.
%Therefore at least one of  $(k_n)_{n \in \N}$ and
%$(k_n+b_n)_{n \in \N}$ is unbounded
%and satisfies (\ref{4SRS}). 
Thus we can not expect\footnote{On the other hand, there are points with
$\alpha_1=\alpha_2$
where the $T$-orbits of $(0,0,0,0)$
is periodic, e.g., $(\alpha,\alpha)$ with $\alpha\in (1-\sqrt{3},-2/3]\cup [-1/4,-2/9]$.} the boundedness of the orbits of $T$ 
when $\alpha_1=\alpha_2$.
Excluding these cases, a natural generalization of the above conjecture for (\ref{SRS}) would be a

\begin{conj}
\label{4Conj}
If $c_2>2|c_1|-2$ and $c_2-2<\frac{c_1^2}4<4$, then for any initial vector
$(k_1,k_2,k_3,k_4)\in \Z^4$ the sequence satisfying (\ref{4SRS}) is periodic,
\end{conj}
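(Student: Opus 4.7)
The plan is to attack Conjecture \ref{4Conj} along the lines of the eleven known parameter values of its two-dimensional analogue. By bijectivity of $T$, the statement reduces to showing that every forward orbit $\{T^n(k_1,k_2,k_3,k_4)\}_{n\in\N}$ stays bounded in $\Z^4$. Under the hypotheses, the characteristic polynomial $x^4+c_1x^3+c_2x^2+c_1x+1=(x^2+\alpha_1 x+1)(x^2+\alpha_2 x+1)$ has four simple roots $e^{\pm i\theta_j}$ on the unit circle with $\theta_1\ne\theta_2$, so $\Phi$ is diagonalisable and gives a decomposition $\R^4 = V_1 \oplus V_2$ into two $\Phi$-invariant real planes on which $\Phi$ acts as rotation by $\theta_j$. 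Writing $\pi_j$ for the projection onto $V_j$, the Euclidean norm $\|\pi_j v\|$ is an exact $\Phi$-invariant, so boundedness of the $T$-orbit of $v$ is equivalent to boundedness of $\|\pi_j T^n v\|$ for $j=1,2$.

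Since $T(v)=\Phi(v)+\varepsilon(v)e_4$ with $\varepsilon(v)\in (0,1]$, iteration gives
$$
\pi_j T^n v = \Phi^n\pi_j v + \sum_{k=0}^{n-1}\varepsilon(T^k v)\,\Phi^{n-1-k}\pi_j e_4,
$$
so the conjecture amounts to showing that this discrete integral, with amplitudes $\varepsilon(T^k v)\in(0,1]$ acting against a rotating vector of fixed length in $V_j$, stays bounded for all $n$. The first concrete step would be to build a bounded fundamental domain $\mathcal{D}\subset\R^4$ carrying a natural $T$-induced partition: pull back $\Z^4$ via the diagonalising change of basis, and partition $\mathcal{D}$ by the finitely many symbols produced by the rounding $-\lceil c_1 k_{n+3}+c_2 k_{n+2}+c_1 k_{n+1}+k_n-1\rceil$ on any prescribed ball.

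The decisive step, by analogy with \cite{LHV,Akiyama-Brunotte-Pethoe-Steiner:07,KLV}, would be a renormalisation argument: exhibit a sub-region of $\mathcal{D}$ whose first-return map under $T$ is affinely conjugate to $T$ itself, possibly with rescaled parameters, and iterate. For algebraically special $(\theta_1,\theta_2)$ satisfying a Pisot-type relation, one might hope that such self-similarity closes after finitely many stages, producing a piecewise-affine packing that forces every orbit into a periodic cycle; this would be the four-dimensional counterpart of the polygonal constructions that handled the eleven known values of $\lambda$ in (\ref{SRS}).

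The main obstacle, which I do not see how to bypass in generality, is precisely the one blocking the 2D conjecture for all but countably many parameters: for generic $(\theta_1,\theta_2)$ no such algebraic renormalisation exists, and the displayed cumulative error behaves like an almost-periodic walk whose long-time boundedness is not forced by any known invariant. A probabilistic or ergodic a priori bound exploiting the unique ergodicity of the $\Phi$-rotation on the invariant tori might be hoped for, but controlling the feedback between the orbit and the amplitudes $\varepsilon(T^k v)$ appears genuinely hard, and I expect Conjecture \ref{4Conj} to require a substantially new idea beyond the current renormalisation toolbox.
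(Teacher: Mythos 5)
The statement you are addressing is Conjecture \ref{4Conj}: the paper offers no proof of it, and neither do you. Your text is a research programme, not a proof, and you say so yourself in the final paragraph (``I do not see how to bypass [the main obstacle] in generality''). So the verdict is simply that there is a gap, and the gap is the entire argument: the reduction to boundedness via bijectivity, the splitting $\R^4=V_1\oplus V_2$ into $\Phi$-invariant rotation planes, and the formula $\pi_j T^n v=\Phi^n\pi_j v+\sum_{k=0}^{n-1}\varepsilon(T^kv)\Phi^{n-1-k}\pi_j e_4$ with $\varepsilon(T^kv)\in(0,1]$ are all correct and standard, but they only restate the problem; the decisive renormalisation step is hypothetical and, as you note, is only known to work for a handful of algebraically special parameters even in the two-dimensional model (\ref{SRS}). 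One small technical point: the Euclidean norm of $\pi_j v$ is not literally $\Phi$-invariant in the standard coordinates of $\Z^4$; you need the $\Phi$-invariant quadratic form on each plane obtained after the diagonalising change of basis, which is equivalent to the Euclidean norm there but should be stated as such.

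You should also be aware that the authors themselves do not believe Conjecture \ref{4Conj} in the stated generality: immediately after formulating it they write that they are ``pessimistic about its validity, due to the existence of very large orbits,'' and they retreat to the measure-theoretic Conjecture \ref{4ConjMeas} (periodicity of the orbit of the origin for almost every $(\alpha_1,\alpha_2)$), which is what actually feeds into their Conjecture \ref{c6} that $c(6)=1$. So attempting a proof for \emph{all} initial vectors and \emph{all} admissible $(c_1,c_2)$ may be aiming at a false statement; the honest conclusion of your analysis --- that the cumulative rounding error behaves like an almost-periodic walk whose boundedness is not forced by any known invariant --- is consistent with the authors' own assessment, and the realistic targets here are either specific algebraic parameter values (the analogue of the eleven known $\lambda$'s) or the almost-everywhere version.
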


\noindent
since $c_2>2|c_1|-2$ and $c_2-2<\frac{c_1^2}4<4$ is equivalent to $\alpha_i\in (-2,2)$ for $i=1,2$ and $\alpha_1\neq \alpha_2$.

We are pessimistic about its validity, due to the existence of 
very large orbits. 
However, even if Conjecture \ref{4Conj} may not hold, it could be true 
for almost all cases. 

Let us restrict ourselves to the orbit of the origin.
Since periodic orbits are often dominant in zero entropy systems, unbounded orbits may not give a contribution of 
positive measure in \S 4 and period cells would exhaust the total square $(-2,2)^2$ in measure. We propose a weaker

\begin{conj}
\label{4ConjMeas}
Letting $(k_{-4},k_{-3},k_{-2},k_{-1})=(0,0,0,0)$, the sequence satisfying (\ref{4SRS}) is periodic for all most all $(\alpha_1,\alpha_2)\in (-2,2)^2$ in measure.
\end{conj}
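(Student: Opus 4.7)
The plan is to reduce Conjecture \ref{4ConjMeas} to a measure-theoretic exhaustion statement for period cells in $(-2,2)^2$. For each finite integer word $(k_0,k_1,\dots,k_{p-1})$ representing a purely periodic $T$-orbit through the origin, the iterated inequalities (\ref{4SRS}) cut out an open semi-algebraic subset $P(k_0,\dots,k_{p-1}) \subset (-2,2)^2$ whose boundary is a finite union of algebraic curves, hence of Lebesgue measure zero. By injectivity of $T$, two such cells with distinct defining words are automatically disjoint, and the union $P = \bigcup P(\cdot)$ is precisely the set of parameters yielding a periodic orbit of the origin. The conjecture is then equivalent to $\mu(P) = 16$, and my first step would be to verify carefully that $P$ is measurable as a countable union of such cells.

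The second step is a reduction to a Diophantine sub-case. For almost every $(\alpha_1,\alpha_2) \in (-2,2)^2$, the angles $\theta_j$ defined by $\alpha_j = -2\cos\theta_j$ yield a rotation vector $(\theta_1/2\pi,\theta_2/2\pi)$ satisfying a Diophantine condition: this full-measure set is preserved under the change of coordinates $\alpha \mapsto \theta$ away from the diagonal $\alpha_1 = \alpha_2$. On this set, writing $\epsilon_n := k_{n+4} + c_1 k_{n+3} + c_2 k_{n+2} + c_1 k_{n+1} + k_n \in (0,1]$, the orbit of the origin is the response of a linear recurrence with characteristic polynomial $(x^2+\alpha_1 x+1)(x^2+\alpha_2 x+1)$ driven by the bounded sequence $(\epsilon_n)$. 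Projecting onto the two invariant eigenplanes of $\Phi$ converts this into a pair of driven planar rotations. I would attempt to show that, under the Diophantine hypothesis, the cumulative sums governing the drift in each eigenplane stay bounded, so the orbit is confined to finitely many lattice points and must be periodic.

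A more constructive alternative, paralleling the enumeration of Section 4, is to produce a sequence of explicit families of polynomials $R(x)$ whose associated cells collectively exhaust $(-2,2)^2$ in measure. The heuristic of \S 4, where only $23$ polynomials already give weighted measure $\approx 0.458$, suggests that a systematic construction using best continued-fraction approximations of $(\theta_1,\theta_2)$ could supply a family whose cell-measure sum converges to $16$. A Borel--Cantelli argument would then bound the measure of parameters missed by every cell in the family, by controlling how rarely a given approximation fails to trigger the algorithm (\ref{ShiftedAddition}).

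The main obstacle is that $T$ is not a smooth perturbation of the rotation $\Phi$ but is discontinuous across codimension-one hyperplanes, so standard KAM machinery does not apply directly, and one must genuinely rule out a nowhere-dense Cantor-type set of $(\alpha_1,\alpha_2)$ of positive measure that supports unbounded orbits. This is essentially the same obstruction that has kept the two-dimensional analogue (\ref{SRS}) open in full generality, cf.\ \cite{Akiyama-Brunotte-Pethoe-Steiner:07,LHV,KLV}, and I expect the four-dimensional case to require a quantitatively stronger control of the accumulated rounding errors than anything currently available. Consequently, while the plan above gives a coherent strategy, the decisive quantitative step—bounding the measure of parameters for which the driven recurrence exhibits resonant drift—is where a genuinely new idea will be needed.
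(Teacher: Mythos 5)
This statement is Conjecture \ref{4ConjMeas}: the paper offers no proof of it, and the authors explicitly present it as a weakening of Conjecture \ref{4Conj} (about which they say they are ``pessimistic'') motivated only by numerical evidence (40 explicit period cells covering measure $\approx 0.505$ of the square). So there is no proof in the paper to compare yours against, and your proposal --- which you yourself describe as a strategy whose ``decisive quantitative step'' still needs ``a genuinely new idea'' --- is not a proof either. Your first step (that the set $P$ of periodic parameters is a countable union of semi-algebraic cells with negligible boundary, so the conjecture is $\mu(P)=16$) is correct and consistent with how the paper organizes \S\ref{Proof2} and \S\ref{Four}; the paper additionally shows (via Lemma \ref{Str2} and the ceiling identity $\lceil x\rceil+\lceil y\rceil-\lceil x+y+1\rceil\in\{-1,0\}$) that \emph{every} periodic orbit of the origin yields a usable cell, which is the one piece of genuine progress toward the conjecture that the paper contains and that your sketch does not reproduce.

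The concrete gap is in your second step. Writing $\epsilon_n:=k_{n+4}+c_1k_{n+3}+c_2k_{n+2}+c_1k_{n+1}+k_n\in(0,1]$ and projecting onto the eigenplanes of $\Phi$ gives $z_{n+1}=e^{i\theta_j}z_n+w_n$ with $w_n$ proportional to $\epsilon_n$, so boundedness of the orbit requires boundedness of the exponential sums $\sum_{m\le n}\epsilon_m e^{-im\theta_j}$. A Diophantine condition on $(\theta_1/2\pi,\theta_2/2\pi)$ controls such sums only for a \emph{fixed} driving sequence of bounded variation or known spectral type; here $\epsilon_m$ is the rounding error generated by the orbit itself, so it can (and a priori does) correlate with $e^{im\theta_j}$, and nothing in the Diophantine hypothesis rules out resonant drift. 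This self-referential correlation is precisely the obstruction that has kept even the two-dimensional recursion (\ref{SRS}) open outside finitely many algebraic $\lambda$, and it is not removed by restricting to a full-measure set of parameters. Likewise, your Borel--Cantelli alternative lacks any mechanism forcing the cell measures to sum to $16$: the cells are not produced by an approximation scheme with controlled error, and the numerical total $0.505$ gives no rate. As it stands, your proposal is a reasonable research program for an open problem, not a proof, and it should not be presented as resolving the conjecture.
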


Here our measure is equivalent to the two-dimensional Lebesgue measure.
See Figure \ref{SalemParry3} for period cells. Black dots are the points where the orbit of the origin might be unbounded.

Note that once we find a period of $T$ starting from the origin, 
Lemma \ref{Str1} and Lemma \ref{Str2} give us a (possibly degenerated)
period cell.
This fact 
is clear when $g_i$ does not visit $1$ and we can apply Lemma \ref{Str1} with
$$
u=\frac 12 \min_{i} g_i, \quad v=\frac 12(1+\max_i g_i).
$$
For Lemma \ref{Str2}, it looks like we have additional 
constraints. We shall show that this is not the case. 
Taking the period $p\in \N$ 
with $T^p((0,0,0,0))=(0,0,0,0)=(k_{-4},k_{-3},k_{-2},k_{-1})$, we have
$$
g_n=t_{n-1}(1)+k_n=k_{n-4}+c_1 k_{n-3} + c_2 k_{n-2} + c_1k_{n-1}+ k_{n}
$$
for $n=0,\dots,p$ and
$$
k_0=1,\ k_1=-\lceil c_1-1 \rceil.
$$
If $g_i=1$ occurs as a polynomial of $\Z[\alpha_1,\alpha_2]$ with $1\le i \le p-1$, then $k_{i-2}=k_{i-3}+k_{i-1}=0$ and $k_{i-4}+k_i=1$.
From $g_{i\pm 1}\in (0,1]$, we have
%$$
%0<k_{i-5}+c_1 k_{i-4} + c_2 k_{i-3} + c_1 k_{i-2}+ k_{i-1}\le 1
%$$
%and
%$$
%0<k_{i-3}+c_1 k_{i-2} + c_2 k_{i-1} + c_1k_{i}+ k_{i+1}\le 1.
%$$
$$
0<g_{i-1}=k_{i-5}+c_1 k_{i-4} + (c_2-1) k_{i-3}\le 1
$$
and
$$
0<g_{i+1}=(1-c_2)k_{i-3} + c_1(1-k_{i-4})+ k_{i+1}\le 1.
$$
These imply
$$
k_{i-5}=-\lceil c_1 k_{i-4} + (c_2-1) k_{i-3}-1 \rceil
$$
and
$$
k_{i+1}=-\lceil (1-c_2)k_{i-3} + c_1(1-k_{i-4}) -1 \rceil.
$$
We see
$$
k_1\le k_{i-5}+k_{i+1}
$$
from an easy fact
$$
\lceil x \rceil + \lceil y \rceil - \lceil x+y+1 \rceil \in \{-1,0\}
$$
for any $x,y\in \R$. Therefore when $g_i=1$, we have
$$
g_{i-1}+g_{i+1}=k_{i-5} + c_1+ k_{i+1} \ge c_1+k_1=g_1.
$$
Since $g_1=g_{i+1}+g_{i-1}$ implies $g_1>g_{i+1}$, 
we can apply Lemma \ref{Str2} in any case with
$$
u=\frac 12 \min\left(\min_i g_i, \min_{\substack{g_i=1\\
g_1=g_{i-1}+g_{i+1}}} (g_1-g_{i+1})\right),
\quad v=\frac 12(1+\max_{g_i\neq 1} g_i).
$$
Therefore we can apply Lemma \ref{Str2} for every period starting from the origin.
Since we expect such period cells to cover $(-2,2)^2$ in measure as in 
Conjecture \ref{4ConjMeas}, for any 
$\eps>0$, we will find a finite union of period cells whose measure is not less than $1-\eps$.
Following the same proof as Theorem \ref{main2}, we arrive at a plausible
\begin{conj}
\label{c6}
$c(6)=1$.
\end{conj}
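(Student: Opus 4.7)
The plan is to derive Conjecture \ref{c6} from Conjecture \ref{4ConjMeas} using the machinery developed in \S 4 and \S 5. The key point, already essentially established in \S 5, is that every periodic $T$-orbit of the origin produces a period cell in the $(\alpha_1,\alpha_2)$-plane on which Lemma \ref{Str2} applies, so that $d_{\beta^m}(1)$ is $(1,p)$-periodic for all sufficiently large $m$ whose conjugate data fall in the cell. Assuming Conjecture \ref{4ConjMeas}, the set of $(\alpha_1,\alpha_2)\in(-2,2)^2$ with periodic $T$-orbit has full two-dimensional Lebesgue measure. Since the symbolic trajectory $(k_n)$ is locally constant away from a measure-zero set of discontinuity curves (where some $c_1 k_{n+3}+c_2 k_{n+2}+c_1 k_{n+1}+k_n$ is exactly an integer), almost every such point lies in the open interior of its period cell.

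Next I would note that these period cells are pairwise disjoint, since the symbolic trajectory $(k_n)$, and hence the cell itself, is uniquely determined by $(\alpha_1,\alpha_2)$ at any interior point. Their countable disjoint union $U$ therefore covers $(-2,2)^2$ up to a Lebesgue null set. Passing to the absolutely continuous weighted measure
$$
\mu_w(A):=\frac{1}{\pi^2}\iint_A \frac{d\alpha_1\, d\alpha_2}{\sqrt{(4-\alpha_1^2)(4-\alpha_2^2)}},
$$
we have $\mu_w(U)=1$, and countable additivity provides, for each $\eps>0$, a finite subcollection of period cells whose union $S_{\eps}$ satisfies $\mu_w(S_\eps)\geq 1-\eps$.

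Then I would rerun the proof of Theorem \ref{main2} verbatim with $S_{\eps}$ in place of $S$. Because only finitely many cells are involved, a common threshold $m_0=m_0(\eps)$ handles all of them simultaneously, and the equidistribution of $(m\theta_1/2\pi,m\theta_2/2\pi)\bmod\Z^2$ combined with Jordan-measurability of the shrunk region (as in the passage from $S(\eps)$ to $S$ in \S 4) yields
$$
\liminf_{M\to\infty}\frac{1}{M}\,\mathrm{Card}\left\{m\in[1,M]\cap\Z \mid d_{\beta^m}(1)\text{ is }(1,p)\text{-periodic}\right\}\;\geq\;\mu_w(S_\eps)\;\geq\;1-\eps.
$$
Since $\eps>0$ is arbitrary and $c(6)\leq 1$ trivially, this gives $c(6)=1$.

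The deep obstacle is of course Conjecture \ref{4ConjMeas} itself; the measure-theoretic reduction above is routine given the tools already in the paper. Proving Lebesgue-almost sure periodicity of the $T$-orbit of the origin for the four-dimensional discretized rotation looks very hard: the two-dimensional analogue (\ref{SRS}) is resolved for only $11$ parameters after decades of work, and the absence of hyperbolicity (all eigenvalues of $\Phi$ lie on the unit circle) blocks the usual invariant-cone or Pesin-theoretic machinery. Any serious attack would likely have to exploit the self-reciprocal algebraic structure of the recurrence and the arithmetic of $(\alpha_1,\alpha_2)$, rather than a generic dynamical argument, and would also need to rule out positive-measure families of escape orbits like those suggested by the black dots in the period-cell picture.
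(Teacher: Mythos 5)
The statement you are asked to prove is labelled as a \emph{conjecture} in the paper, and the paper offers no proof of it --- only the same conditional reduction you describe. Your argument reproduces, essentially verbatim, the reasoning of \S 5: every periodic $T$-orbit of the origin yields a period cell on which Lemma \ref{Str2} applies (the paper verifies the needed inequalities $g_{i-1}+g_{i+1}\ge g_1$ and the strict case via the ceiling identity $\lceil x\rceil+\lceil y\rceil-\lceil x+y+1\rceil\in\{-1,0\}$); distinct cells are disjoint because the symbolic orbit determines the cell; a finite subfamily of cells captures measure $\ge 1-\eps$; and the equidistribution argument from the proof of Theorem \ref{main2} converts this into a lower density bound $\ge 1-\eps$. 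All of these steps are sound and are exactly what the authors intend when they call Conjecture \ref{c6} ``plausible.''

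The genuine gap is that your argument is conditional on Conjecture \ref{4ConjMeas}, which you assume outright and which is the entire content of the problem. You acknowledge this yourself, but it must be said plainly: what you have written is a (correct) reduction of Conjecture \ref{c6} to Conjecture \ref{4ConjMeas}, not a proof of $c(6)=1$. Almost-sure periodicity of the origin's orbit under the four-dimensional discretized rotation $T$ is wide open --- the two-dimensional analogue (\ref{SRS}) is known for only eleven parameter values, the numerics in Figure \ref{SalemParry3} show parameters where no period is found within $50{,}000$ iterations, and the authors themselves are pessimistic even about the stronger Conjecture \ref{4Conj}. Until Conjecture \ref{4ConjMeas} is established, the statement $c(6)=1$ remains a conjecture, and no amount of care in the measure-theoretic bookkeeping (which is indeed routine, as you note) closes that gap.
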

For general Salem number $\beta$'s, we numerically observe many
$(m,p)$-periodic $d_{\beta}(1)$'s with $m>1$. 
Interestingly, we find no role of $(m,p)$-periods with $m>1$
in the above discussion.
Every period of $T$ gives rise to $(1,p)$-periodic $d_{\beta^m}(1)$ 
and other orbits of $T$ are aperiodic if they exist.
Of course
this does not cause any contradiction, since we are studying sufficiently large $\beta$ with respect to the location of the conjugates.
%\newpage
\bigskip

\begin{center}
{\bf Appendix 1.}
\end{center}

Additional polynomials to improve $c(6)$:
\begin{align*}
R_{19}=&1+3 x+5 x^2+5 x^3+3 x^4+x^5,\\
R_{20}=&1+3 x+5 x^2+6 x^3+5 x^4+3 x^5+x^6,\\
R_{21}=&1+3 x+5 x^2+6 x^3+6 x^4+5 x^5+3 x^6+x^7, \\
R_{22}=&1+3 x+5x^2+6 x^3+6 x^4+6 x^5+6 x^6+5 x^7+3 x^8+x^9,\\
R_{23}=&1+3 x+4x^2+4 x^3+4 x^4+4 x^5+3 x^6+x^7,\\
R_{24}=&1+3 x+4 x^2+3 x^3+2 x^4+3 x^5+4 x^6+3 x^7+x^8,\\
R_{25}=&1+x+x^2+2 x^3+2 x^4+2 x^5+2x^6+x^7+x^8+x^9,\\
R_{26}=&1+x+x^2+2 x^3+2 x^4+2 x^5+3 x^6+2 x^7+2 x^8+2 x^9+x^{10}+x^{11}+x^{12},\\
R_{27}=&1+4 x+8 x^2+11 x^3+11 x^4+8 x^5+4 x^6+x^7,\\
R_{28}=&1+x+2 x^2+2 x^3+2 x^4+3 x^5+2 x^6+2 x^7+2 x^8+x^9+x^{10},\\
R_{29}=&1+x+2 x^2+2 x^3+3 x^4+3 x^5+3 x^6+3 x^7+2 x^8+2 x^9+x^{10}+x^{11},\\
R_{30}=&1+x+2 x^2+3 x^3+3 x^4+4 x^5+4 x^6+4 x^7+4 x^8+3 x^9+3 x^{10}+2 x^{11}+x^{12}+x^{13},\\
R_{31}=&1+2x^2-x^3+2 x^4-2 x^5+2 x^6-2 x^7+2 x^8-x^9+2 x^{10}+x^{12},\\
R_{32}=&1+2 x+2 x^2+2 x^3+3 x^4+3 x^5+2 x^6+2 x^7+2 x^8+x^9,\\
R_{33}=&1+2x+2 x^2+3 x^3+4 x^4+4 x^5+4 x^6+4 x^7+3 x^8+2 x^9+2 x^{10}+x^{11},\\
L_6=&1-3 x+5x^2-5 x^3+3 x^4-2 x^6+2 x^7-2 x^9+3 x^{10}-2 x^{11}+2 x^{13}-2 x^{14}\\
&+3 x^{16}-5 x^{17}+5 x^{18}-3x^{19}+x^{20},\\
L_7=&1-3 x+6 x^2-8 x^3+8 x^4-5 x^5+5 x^7-7 x^8+5 x^9-5 x^{11}+8 x^{12}-8 x^{13}\\
&+6 x^{14}-3 x^{15}+x^{16}
\end{align*}

\begin{figure}[htb]
\includegraphics[width=\columnwidth]{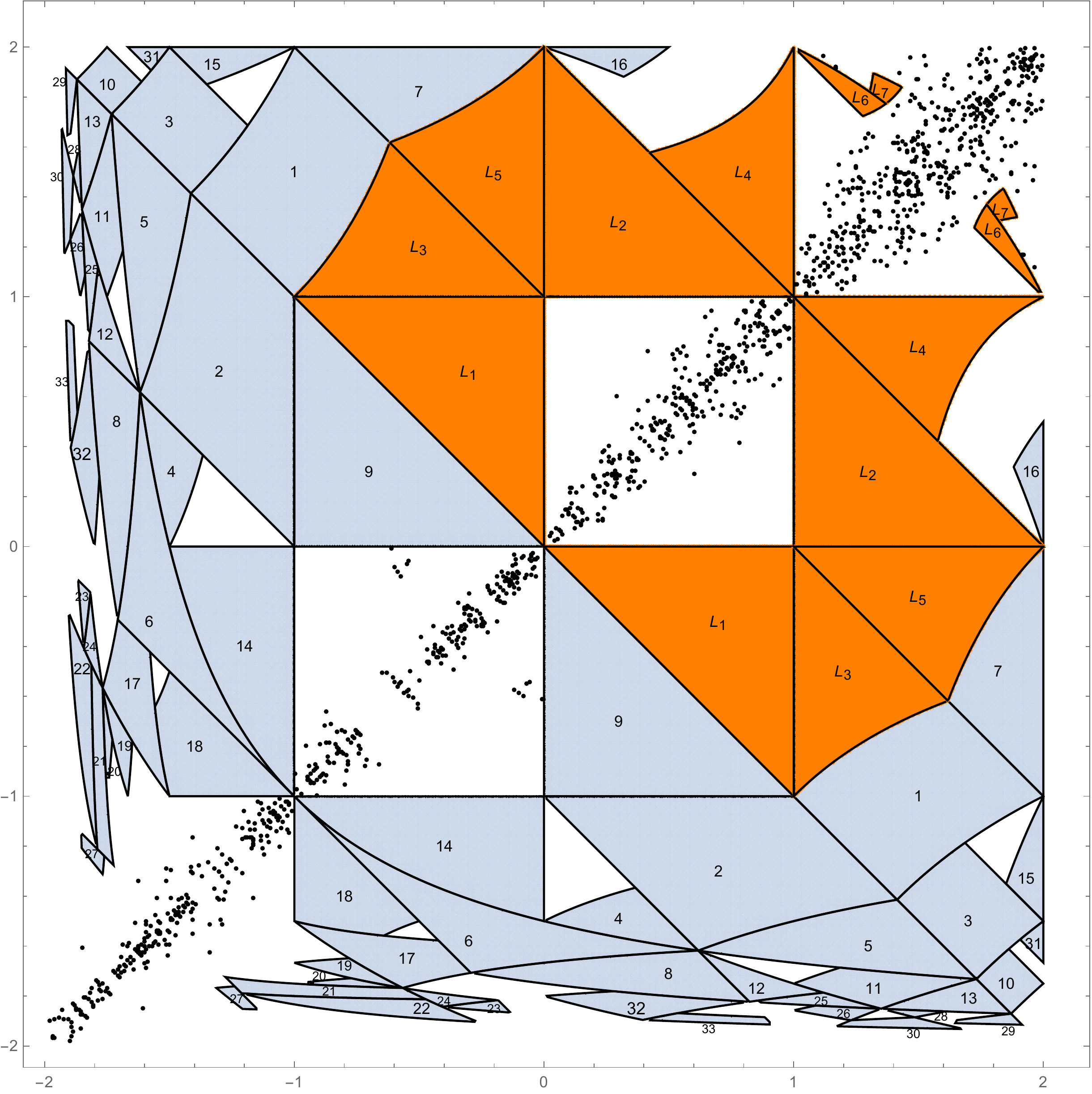}
\caption{Period cells occupy more than half in measure\label{SalemParry3}. The orbit of the origin does not form a period 
until 50,000 iterations at black dots.}
\end{figure}

One can check that 
these 40 regions are mutually disjoint by symbolic computation.
This gives the estimate $c(6)\ge 0.505254$, see Figure \ref{SalemParry3}. 

In the list of the regions in \S \ref{Proof2}, all 
the inequalities defining the period cells in $y<x$
are of the form $A < y$ or $y \le B$ with some $A$ and $B$. 
This is no longer true for the polynomial $L_6$ and $L_7$. 
The cell for $L_6$ is an open set, and the common boundary of the two cells belongs to the one for $L_7$. Thus
an inequality of the form $A\le y$ is required for the cell for $L_7$ 
in $y<x$. 
The point $(\alpha_1,\alpha_2)=((11+\sqrt{2})/7,(11-\sqrt{2})/2)$ is the end point of the common boundary but belongs to neither of them. 
Applying our algorithm at this point, 
we obtain a polynomial of degree $412$ and the corresponding cell in $y<x$
degenerates to a singleton $(\alpha_1,\alpha_2)$.

\bigskip

\begin{center}
{\bf Appendix 2.} 
\end{center}
\medskip

Our strategy in this paper is to study sufficiently large $\beta$. However, 
for regions $\RR_9$ and $\LL_1$, we can remove this adjective
``sufficiently large". Indeed, at the beginning of this study, 
we found Proposition \ref{epnsion of reggular} below and then generalized it
to our current setting. 
In particular, the formulation of
Lemma \ref{Str2} is inspired by the case (2) of Proposition \ref{epnsion of reggular}.
Let $P$ be the minimum polynomial of a sextic Salem number:

\begin{eqnarray}
\label{Salem of degree6}
P(x)=x^6-ax^{5}-bx^{4}-cx^{3}-bx^{2}-ax+1.
\end{eqnarray}

We denote by $Q$ its trace polynomial:
\begin{eqnarray}
\label{trace6}
Q(y)=y^3-ay^2-(b+3)y-(c-2a)=(y-\gamma)(y-\alpha_1)(y-\alpha_2)
\end{eqnarray}

We say that $\beta$ is {\bf well-posed} if its trace polynomial $Q(y)$ has three roots $\gamma,\alpha_1,\alpha_2$ such that
$$
-1<\alpha_1<0<\alpha_2<1<2<\gamma.
$$
Then we have a

\begin{lem}\label{Salem 6 of t3}
A real number $\beta >1$ is a well-posed Salem number of degree 6 
if and only if $\beta$  is the dominant root of the polynomial in $\Z[x]$ of the form given by (\ref{Salem of degree6}) satisfying the following conditions:
\begin{enumerate}
\item  $2-2b<2a+c$.
\item $c<2a$.
\item  $|b+2|<c-a$.
\end{enumerate}
\end{lem}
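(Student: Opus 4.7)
The plan is to translate the well-posedness conditions $-1<\alpha_1<0<\alpha_2<1<2<\gamma$ on the three real roots of the monic cubic $Q(y)$ into sign conditions on $Q$ evaluated at the four test points $y\in\{-1,0,1,2\}$, and then read off the equivalence with the three coefficient inequalities by direct computation.

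A direct substitution into $Q(y)=y^3-ay^2-(b+3)y-(c-2a)$ yields
\[
Q(-1)=a+b-c+2,\quad Q(0)=2a-c,\quad Q(1)=a-b-c-2,\quad Q(2)=2-2a-2b-c,
\]
from which it is immediate that $Q(2)<0$ is exactly condition (1), that $Q(0)>0$ is exactly condition (2), and that $Q(-1)<0$ and $Q(1)<0$ are respectively $b+2<c-a$ and $-(b+2)<c-a$, whose conjunction is condition (3). So the three arithmetic inequalities are together equivalent to the four sign conditions $Q(-1)<0$, $Q(0)>0$, $Q(1)<0$, $Q(2)<0$.

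For the forward direction, if $\beta$ is a well-posed Salem number of degree $6$, then by the general fact reviewed at the start of the Preliminary its minimal polynomial is self-reciprocal of the shape (\ref{Salem of degree6}), and the required signs of $Q$ at $-1,0,1,2$ can be read off directly from the ordering $\alpha_1<\alpha_2<\gamma$, since a monic cubic with three real roots is negative to the left of its smallest root, alternates in sign between consecutive roots, and is positive to the right of the largest. For the converse, $Q(-1)<0<Q(0)$ and $Q(0)>0>Q(1)$ together with the intermediate value theorem produce two distinct real roots $\alpha_1\in(-1,0)$ and $\alpha_2\in(0,1)$; since a real cubic has either one or three real roots, the third root $r$ is automatically real, and the factorisation $Q(y)=(y-\alpha_1)(y-\alpha_2)(y-r)$ combined with $(2-\alpha_1)(2-\alpha_2)>0$ and $Q(2)<0$ forces $r>2$. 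Setting $\gamma=r$ gives the well-posed root configuration, and the Salem-number characterisation recalled in the Preliminary ($G(2)<0$ and $G$ has $d-1=2$ distinct roots in $(-2,2)$) then certifies that $P$ is the minimal polynomial of a Salem number of degree $6$ whose dominant root is $\beta$.

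The whole argument is essentially routine; the only step requiring a moment of care is the converse, where one must infer the correct relative placement of the three real roots of $Q$ from sign data at only four test points, and the product-of-linear-factors trick just described is the cleanest way to pin this down without case analysis.
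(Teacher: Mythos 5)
Your proof is correct and follows essentially the same route as the paper's: both reduce well-posedness to the sign conditions $Q(-1)<0$, $Q(0)>0$, $Q(1)<0$, $Q(2)<0$ and then identify these with the three coefficient inequalities by direct evaluation; you merely spell out the intermediate-value and root-placement details that the paper leaves implicit. (Both arguments also quietly rely on $P$ being irreducible in the converse direction — a point that can be settled by noting that no integer lies in $(-1,0)$ or $(0,1)$, nor can $\alpha_1\alpha_2\in(-1,0)$ be an integer, so $Q$ and hence $P$ cannot factor over $\Q$ — but since the paper elides this too, your proposal matches its level of rigour.)
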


\begin{proof}  
Since $Q$ is cubic, the well-posedness is equivalent to $Q(-1)<0,Q(0)>0,Q(1)<0,Q(2)<0$. We see (1) $\Leftrightarrow Q(2)<0$, (2) $\Leftrightarrow Q(0)>0$, and 
$Q(\pm1)<0 \Leftrightarrow$  (3). 
\end{proof}

Classifying into $b<-1$ and $b\ge -1$, we obtain the following Proposition, which gives a partial response to Problem 1 in \cite{J.-L.Verger-Gaugry2008}.

\begin{prop}\label{epnsion of reggular} 
Let $\beta $ be a well-posed Salem number of minimum polynomial $P$ in (\ref{Salem of degree6}). Then $\beta$ is Parry number and we have: 
\begin{enumerate}
\item If $2\leq -b<c-a+2$,  then
\begin{align}
\label{Case1}
d_{\beta}(1)=&a-1(a+b+1,c-a+b+1,c-a-1,2a-c-1,\\
\nonumber
&2a-c-1,c-a-1,c-a+b+1,a+b+1,a-2,a-2)^{\infty}
\end{align}
\item If $a-c+2<-b\leq 1$ then 
\begin{align}
\label{Case2}
d_{\beta}(1)=&a(b+1,c-a-1,a-1,2a+b-c+1,c-a-1,\\
\nonumber
&c-a-1,2a+b-c+1,a-1,c-a-1,b+1,a-1,a-1)^{\infty}
\end{align}
\end{enumerate}
\end{prop}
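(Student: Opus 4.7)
The plan is to construct, for each case, a monic self-reciprocal auxiliary polynomial $R(x)\in\Z[x]$ so that $f(x):=P(x)R(x)$, written in the form
$$
f(x)=x^{p+1}-c_1x^p-c_2x^{p-1}-\cdots-c_{p-1}x^2-(c_p+1)x+(c_1-c_{p+1}),
$$
has coefficients $c_1,\ldots,c_{p+1}$ equal to the claimed digits of $d_\beta(1)$. For case (1) I would take $R(x)=1+x-x^2-x^3+x^4+x^5$ (the $R_9$ of Section~\ref{Proof2}) with $p=10$; for case (2), $R(x)=1-x^2+x^3+x^4-x^5+x^7$ (the $L_1$) with $p=12$. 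Self-reciprocality of $f$ forces $c_{p+1}=c_p=c_1-1$ and $c_{p+1-i}=c_i$ for $2\le i\le p-1$, so the period is a palindrome with two equal central digits and tail $c_1-1,c_1-1$, exactly as in the statement.

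A direct polynomial multiplication of $P\cdot R$ yields each $c_i$ as an explicit $\Z$-linear expression in $a,b,c$; comparing coefficient by coefficient one checks that they match the formulas of the proposition (e.g.\ $c_1=a-1$, $c_2=a+b+1$, $c_5=2a-c-1$ in case (1); $c_1=a$, $c_2=b+1$, $c_5=2a+b-c+1$ in case (2)). Since $P(\beta)=0$ gives $f(\beta)=0$, rearranging this polynomial identity produces $\sum c_i\beta^{-i}=1$ with the claimed $(1,p)$-periodic pattern. Non-negativity of every $c_i$ then follows from Lemma~\ref{Salem 6 of t3}: $c<2a$ yields $2a-c-1\ge 0$; $|b+2|<c-a$ yields $c-a-1\ge 0$ and (in case (1)) also $c-a+b+1\ge 0$; and $\gamma>2$ forces $a\ge 2$, so $c_1\ge 1$. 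A short localization of the largest root $\gamma$ of $Q$ in the interval $((a-1)+1/(a-1),\,a+1/a)$ for case (1), respectively $(a+1/a,\,(a+1)+1/(a+1))$ for case (2), establishes $\lfloor\beta\rfloor=a-1$ (resp.\ $a$), hence $c_1\le\lceil\beta\rceil-1$.

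The main obstacle will be the lexicographic admissibility $\sigma^n(d_\beta(1))\lexl d_\beta(1)$ for every $n\ge 1$. By eventual periodicity this reduces to $n=1,\ldots,p$, and the palindromic structure collapses these to a handful of independent comparisons. In case (1) the only possible first-position tie is $c_2=a+b+1=a-1=c_1$, forcing $b=-2$; then the next position gives $c_3=c-a-1<a-1=c_1$ by $c<2a$, and a parallel argument settles the shift $n=8$. In case (2) no first-position tie occurs (each $c_i$, $i\ge 2$, is strictly less than $c_1=a$), but the boundary equalities $g_3=g_8=1$ of the coefficients $g_i$ of $Q_4(x)\cdot L_1(x)$ (where $Q_4(x)=x^4+q_1x^3+q_2x^2+q_1x+1$ with $q_1=\gamma-a$, $q_2=\gamma^2-a\gamma-b-1$) put us in precisely the situation later formalised by Lemma~\ref{Str2}; the condition $g_1<g_{i-1}+g_{i+1}$ required there reduces to the polynomial identity $g_2+g_4-g_1=1$, which always holds. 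Once admissibility is established, uniqueness of the greedy $\beta$-expansion identifies the candidate sequence with $d_\beta(1)$, completing the proof.
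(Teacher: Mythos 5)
Your proposal is correct and is essentially the paper's own argument: the paper's two tables of shifted additions of the representation of zero $-1,a,b,c,b,a,-1$ are precisely the multiplications $P\cdot R_9$ and $P\cdot L_1$ that you write down, and the admissibility check likewise rests on Lemma \ref{Salem 6 of t3}. You supply somewhat more detail than the paper on the lexicographic verification (the $b=-2$ tie in case (1), the strict inequalities $c_i<a$ in case (2)), which is consistent with the paper's remark that case (2) motivated Lemma \ref{Str2}.
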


\begin{proof}
Lemma \ref{Salem 6 of t3} (3) implies $c-a\ge 1$ and (2) gives $a\ge 2$
and $c\ge 3$.
Starting from the representation of zero in base $\beta$:
$$
-1,a,b,c,b,a,-1
$$
we can construct another representation as
\bigskip

\begin{scriptsize}
\begin{tabular}{c|c|c|c|c|c|c|c|c|c|c|c|}
-1& a & b &  c& b & a & -1&   &   &   &   & \\
  & -1& a & b & c & b & a & -1&   &   &   & \\
  &   & 1 & -a&-b &-c & -b& -a&  1&   &   & \\
  &   &   & 1 & -a&-b &-c & -b& -a&  1&   & \\
  &   &   &   &-1 & a & b &  c&  b&  a& -1&   \\
  &   &   &   &   &-1 & a & b &  c&  b& a & -1\\
\hline
-1& a-1& a+b+1& c-a+b+1 & c-a-1& 2a-c-1 & 2a-c-1& c-a-1& c-a+b+1& a+b+1 & a-1&-1\\
\end{tabular}
\end{scriptsize}
\bigskip

Performing recursive shifted addition of this new 
representation in base $\beta$,
we obtain the infinite representation (\ref{Case1}). 
We can check the condition (\ref{Selflex}) from Lemma 
\ref{Salem 6 of t3}. Similarly, we have
\bigskip

\begin{scriptsize}
\begin{tabular}{c|c|c|c|c|c|c|c|c|c|c|c|c|c|}
-1& a & b &  c& b & a & -1&   &   &   &   &   &   & \\
  &   & 1 & -a& -b& -c& -b& -a&  1&   &   &   &   & \\
  &   &   & -1& a & b &  c& b & a & -1&   &   &   & \\
  &   &   &   &-1 & a & b &  c&  b&  a& -1&   &   & \\
  &   &   &   &   & 1 & -a& -b& -c& -b& -a&  1&   & \\
  &   &   &   &   &   &   & -1 & a&  b&  c&  b&  a&-1\\
\hline
-1& a& b+1&c-a-1&a-1& 2a+b-c+1& c-a-1 & c-a-1& 2a+b-c+1& a-1&c-a-1& b+1& a &-1 \\
\end{tabular}
\end{scriptsize}
\bigskip

By recursive shifted addition, we obtain the representation 
(\ref{Case2}). The condition (\ref{Selflex}) follows
from Lemma \ref{Salem 6 of t3}.
\end{proof}

%\begin{figure}[htb]
%\includegraphics[width=\columnwidth]{Large.pdf}
%\caption{The orbit of the origin does not return by $50,000$ iterations at the dots $(\alpha_1,\alpha_2)$ \label{Large}}
%\end{figure}

\section*{Acknowledgments}

We would like to thank the anonymous referee for the careful reading of the manuscript. This research was partially supported by JSPS grants (20K03528, 17K05159, 21H00989).

%\bibliography{allbiblio}
%\bibliographystyle{amsplain}

\end{document}